\def\dbar{\bar\partial}
\def\C{{\mathbb C}}
\def\Re{{\rm Re\,  }}
\newcommand{\Com}[1]{}
\def\be{\begin{equation}}
\def\ee{\end{equation}}
\newtheorem{thm}{Theorem}[section]
\newtheorem{lma}[thm]{Lemma}
\newtheorem{cor}[thm]{Corollary}
\newtheorem{prop}[thm]{Proposition}
\theoremstyle{definition}
\newtheorem{df}{Definition}
\theoremstyle{remark}
\newtheorem{preremark}{Remark}
\newtheorem{preex}{Example}
\newenvironment{remark}{\begin{preremark}}{\end{preremark}}
\numberwithin{equation}{section}
\begin{document}

\title[Singular hermitian metrics on holomorphic vector bundles]{Singular hermitian metrics on holomorphic vector bundles}


\author{Hossein Raufi}

\address{H. Raufi\\Department of Mathematics\\Chalmers University of Technology and the University of Gothenburg\\412 96 G\"OTEBORG\\SWEDEN}

\email{raufi@chalmers.se}




\begin{abstract}
We introduce and study the notion of singular hermitian metrics on holomorphic vector bundles, following Berndtsson and P{\u{a}}un. We define what it means for such a metric to be positively and negatively curved in the sense of Griffiths and investigate the assumptions needed in order to locally define the curvature $\Theta^h$ as a matrix of currents. We then proceed to show that such metrics can be regularised in such a way that the corresponding curvature tensors converge weakly to $\Theta^h$. Finally we define what it means for $h$ to be strictly negatively curved in the sense of Nakano and show that it is possible to regularise such metrics with a sequence of smooth, strictly Nakano negative metrics.
\end{abstract}

\maketitle

\section{Introduction}
\noindent Let $E\to X$ be a holomorphic vector bundle over a complex manifold $X$ and let $h$ be a hermitian metric on $E$. In applying the methods of differential geometry to the study of $(E,X)$ the connection matrix and curvature associated with $h$ play a major role. In the classical setting these constructions assume that $h$ is smooth as a function from $X$ to the space of non-negative hermitian forms on the fibers.

However in \cite{D2} Demailly introduced the notion of \textit{singular} hermitian metrics for line bundles, and in a series of papers he and others proceeded to investigate these and prove that, generally speaking, they are a fundamental tool in interpreting notions of complex algebraic geometry analytically.

In \cite{C} and \cite{BP} two different notions of singular hermitian metrics on a holomorphic vector bundle are introduced. We will adopt the latter definition which is the following (see section 3 for a comparison and discussion of the quite different definition used \cite{C}):

\begin{df}\label{df:1}
Let $E\to X$ be a holomorphic vector bundle over a complex manifold $X$. A singular hermitian metric $h$ on $E$ is a measurable map from the base space $X$ to the space of non-negative hermitian forms on the fibers.
\end{df}

On a holomorphic line bundle a hermitian metric $h$ is just a scalar-valued function so that $\theta=h^{-1}\partial h=\partial\log h$, $\Theta=\dbar\partial\log h$ and hence these objects are well-defined as currents as long as $\log h\in L^1_{loc}(X)$. But for holomorphic vector bundles with rank$E\geq2$ it is not clear what the appropriate notions of connection and curvature associated with $h$ are.

Although it is not immediate how to make sense of the curvature of a singular hermitian metric $h$, it is nevertheless still possible to define what it means for $h$ to be positively and negatively curved in the sense of Griffiths. The definition that we will adopt is the following:

\begin{df}\label{df:2}
Let $E\to X$ be a holomorphic vector bundle over a complex manifold $X$ and let $h$ be a singular hermitian metric. We say that $h$ is negatively curved in the sense of Griffiths if $\|u\|^2_h$ is plurisubharmonic for any holomorphic section $u$. Furthermore we say that $h$ is positively curved in the sense of Griffiths if the dual metric is negatively curved.
\end{df}

This is a very natural definition as these conditions both are well-known equivalent properties for smooth metrics; see section 2 where these facts are reviewed. In fact this is almost identical to the definition adopted by Berndtsson-P{\u{a}}un (\cite{BP} Definition 3.1), except that they require $\log\|u\|^2_h$ to be plurisubharmonic, but one can show that the two definitions are equivalent; see section 2.

The main question that we are concerned with in this paper is:\vspace{0.4cm}\\\textit{Given these two definitions, is it possible to define $\theta^h$ and in particular $\Theta^h$, in a meaningful way; for example as currents with measure coefficients?}\vspace{0.4cm}\\Furthermore, if we want these concepts to be useful in practice, one should also be able to approximate them in an appropriate sense.

Of course, Definition \ref{df:1} is by itself far too liberal to provide us with any answer to this question. However, it turns out that the additional requirements in Definition \ref{df:2} rule out most of the possible pathological behaviour.
The following properties are more or less immediate consequences of Definition \ref{df:2}.

\begin{prop}\label{prop:basic}
Let $h$ be a singular hermitian metric on a holomorphic vector bundle $E$, and assume that $h$ is negatively curved in the sense of Griffiths as in Definition \ref{df:2}.\\
(i) (\cite{BP}, Proposition 3.1) If $E$ is a trivial vector bundle over a polydisc, there exists a sequence of smooth hermitian metrics $\{h_\nu\}$ with negative Griffiths curvature, decreasing to $h$ pointwise on any smaller polydisc.\\
(ii) $\log\det h$ is a plurisubharmonic function. In particular, if $\det h\not\equiv0$, then $\log\det h\in L^1_{loc}(X)$.
\end{prop}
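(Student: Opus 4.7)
Since part (i) is quoted directly from \cite{BP}, I focus on (ii). My plan is to reduce to the smooth case using (i) and then pass to the decreasing limit. Because plurisubharmonicity is a local property, I work on a polydisc $\Delta\subset X$ over which $E$ is trivial; part (i) then supplies a sequence of smooth, Griffiths-negative metrics $h_\nu\searrow h$ on any slightly smaller polydisc.

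The crucial step at the smooth level is to show that $\log\det h_\nu$ is plurisubharmonic for each $\nu$. The induced metric $\det h_\nu$ on the line bundle $\det E$ has curvature equal to the trace $\mathrm{tr}_E\Theta^{h_\nu}$, which in the line bundle case is, up to sign, just $\partial\bar\partial\log\det h_\nu$. Choosing an $h_\nu$-orthonormal frame $(e_\alpha)$ at a given point and summing the Griffiths inequalities $i\langle\Theta^{h_\nu}e_\alpha,e_\alpha\rangle_{h_\nu}\leq 0$ over $\alpha$ shows that this trace is a non-positive $(1,1)$-form, hence $\log\det h_\nu$ is plurisubharmonic.

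To pass to the limit, note that since $h_\nu\searrow h$ as non-negative Hermitian forms and the determinant is monotone on that cone, $\det h_\nu\searrow\det h$ pointwise, and therefore $\log\det h_\nu\searrow\log\det h$ (with the convention $\log 0=-\infty$). A decreasing pointwise limit of plurisubharmonic functions is either identically $-\infty$ on a connected component of $X$ or is itself plurisubharmonic; applied component by component, this yields that $\log\det h$ is plurisubharmonic on $X$. Under the assumption $\det h\not\equiv 0$, the function $\log\det h$ is not identically $-\infty$ on any component, and the standard local integrability of psh functions gives $\log\det h\in L^1_{\loc}(X)$.

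The main subtlety I anticipate is the sign bookkeeping in the curvature identity $\Theta(\det E,\det h_\nu)=\mathrm{tr}_E\Theta(E,h_\nu)$ and the translation from Griffiths-negativity on $E$ to plurisubharmonicity of $\log\det h_\nu$; this is classical, but must be checked carefully. Once the smooth assertion is secured, the remainder is a routine monotone-limit argument in pluripotential theory.
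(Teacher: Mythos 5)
Your proposal is correct and follows essentially the same route as the paper: reduce to the smooth approximating sequence from (i), observe that the curvature of the induced metric on $\det E$ is the trace of $\Theta^{h_\nu}$ and hence non-positive so $\log\det h_\nu$ is plurisubharmonic, then pass to the decreasing limit. The only cosmetic difference is that you invoke monotonicity of the determinant on the cone of non-negative hermitian forms without proof, whereas the paper sketches two justifications (a volume-quotient interpretation and simultaneous diagonalization of two hermitian forms); either way the content is the same.
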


In (i), the approximating sequence is obtained through the well known technique of convolution with an approximate identity (see section 6). Also, by duality, if $h$ is positively curved as in Definition \ref{df:2}, there exists an increasing regularising sequence.

Now it turns out that it is possible to define the connection matrix of a singular hermitian metric that is positively or negatively curved as in Definition \ref{df:2}.

\begin{prop}\label{prop:conn}
Let $h$ be a singular hermitian metric on a holomorphic vector bundle $E$, that is negatively curved in the sense of Griffiths, as in Definition \ref{df:2}. Then the current $\partial h$ is locally an $L^2$-valued form and $\theta^h:=h^{-1}\partial h$ is an a.e. well-defined matrix of $(1,0)-$forms.
\end{prop}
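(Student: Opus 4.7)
The plan is to work in a local holomorphic frame $\{e_i\}$ on a trivialising polydisc and show that every entry $h_{i\bar j}:=\langle e_i,e_j\rangle_h$ of the matrix $h$ lies in $W^{1,2}_{\loc}$; this will immediately imply that $\partial h$ is locally an $L^2$-valued form. The diagonal entries $h_{i\bar i}=\|e_i\|^2_h$ are plurisubharmonic by Definition \ref{df:2}, and since plurisubharmonic functions are upper semicontinuous they are locally bounded above; combined with $h_{i\bar i}\geq 0$ this gives $h_{i\bar i}\in L^\infty_\loc$. The Cauchy--Schwarz inequality $|h_{i\bar j}|^2\leq h_{i\bar i}h_{j\bar j}$ promotes this to every entry, so in fact $h$ has locally bounded coefficients.

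For the off-diagonal entries I would use the polarisation identities
$$
\Re h_{i\bar j}=\tfrac12(\|e_i+e_j\|^2_h-\|e_i\|^2_h-\|e_j\|^2_h),\qquad
\Im h_{i\bar j}=-\tfrac12(\|e_i+ie_j\|^2_h-\|e_i\|^2_h-\|e_j\|^2_h),
$$
which express $\Re h_{i\bar j}$ and $\Im h_{i\bar j}$ as differences of locally bounded plurisubharmonic functions.

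The key analytic input is the classical fact that a non-negative, locally bounded plurisubharmonic function $\phi$ has $\partial\phi\in L^2_\loc$. I would prove this by the standard cutoff integration by parts: against a smooth nonnegative cutoff $\chi$, rewrite $\int\chi^2\, i\partial\phi\wedge\dbar\phi\wedge\omega^{n-1}$ as
$$
-2\int \chi\phi\, i\partial\chi\wedge\dbar\phi\wedge\omega^{n-1}-\int \chi^2\phi\, i\partial\dbar\phi\wedge\omega^{n-1}.
$$
The second integral is finite (a bounded function tested against a locally finite positive $(1,1)$-current) and the first is absorbed into the left-hand side by Cauchy--Schwarz, giving a uniform bound $\int\chi^2|\partial\phi|^2\leq C$. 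Rather than making this argument rigorous directly on the singular $h_{i\bar j}$, I would apply it to the smooth, Griffiths-negative approximants $h_\nu\searrow h$ supplied by Proposition \ref{prop:basic}(i); the uniform local upper bound on the $h_\nu$ (inherited from any fixed $h_{\nu_0}$, since $h_\nu \leq h_{\nu_0}$ for $\nu \geq \nu_0$) yields uniform $L^2_\loc$ bounds on each $\partial(h_\nu)_{i\bar j}$, and a weakly convergent subsequence must have limit $\partial h$ because $h_\nu\to h$ in $L^1_\loc$ by monotone convergence.

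Finally, Proposition \ref{prop:basic}(ii) gives $\log\det h\in L^1_\loc$, hence $\det h>0$ a.e., so $h^{-1}$ is defined almost everywhere; multiplying it by the $L^2_\loc$ matrix of $(1,0)$-forms $\partial h$ produces the a.e.\ well-defined $\theta^h=h^{-1}\partial h$. The main technical hurdle I foresee is the $L^2$ step: the Cauchy--Schwarz/integration-by-parts argument is elementary for smooth bounded plurisubharmonic functions, but one must justify that the uniform bound on $\partial h_\nu$ really identifies the weak limit with $\partial h$ in the distributional sense — which is why access to the monotone regularising sequence from Proposition \ref{prop:basic}(i) is essential to the argument.
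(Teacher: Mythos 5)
Your proof is correct, but it takes a genuinely different route from the paper's. The paper's proof runs through the curvature: it first uses the plurisubharmonicity of $\log\det h_\nu$ (decreasing to $\log\det h\in L^1_{\loc}$) to get a uniform $L^1_{\loc}$ bound on $tr(\tilde\Theta^{h_\nu})$, then invokes the eigenvalue estimate $|(\tilde\Theta^{h_\nu}u,u)_{h_\nu}|\leq -tr(\tilde\Theta^{h_\nu})\|u\|^2_{h_\nu}$ and the identity $i\partial\dbar\|u\|^2_{h_\nu}(\xi,\xi)=-(\tilde\Theta^{h_\nu}u,u)_{h_\nu}+\|\tilde D'_{h_\nu}u\|^2_{h_\nu}$ for a constant section $u$ to bound $u^*(\tilde{\partial h}_\nu)^*h_\nu^{-1}(\tilde{\partial h}_\nu)u$ uniformly in $L^1_{\loc}$, and finally converts this into a uniform $L^2_{\loc}$ bound on $\tilde{\partial h}_\nu$ because $h_\nu^{-1}$ is locally bounded below. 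You bypass the curvature altogether by invoking the classical Caccioppoli-type estimate (a nonnegative, locally bounded psh function is in $W^{1,2}_{\loc}$), applied via polarisation to the scalar psh functions $\|e_i\|^2_{h_\nu}$, $\|e_i+e_j\|^2_{h_\nu}$, $\|e_i+ie_j\|^2_{h_\nu}$, with uniform local boundedness coming from $0\leq h_\nu\leq h_{\nu_0}$. This is more elementary — it avoids Remark~\ref{remark:uniTr} and the hypothesis $\det h\not\equiv 0$ for the $L^2_{\loc}$ conclusion — whereas the paper's route is more in keeping with its thematic focus on the curvature tensor. A small simplification to note: in your integration by parts, the term $\int\chi^2\phi\,i\partial\dbar\phi\wedge\omega^{n-1}$ is nonnegative (since $\phi\geq 0$ and $i\partial\dbar\phi\geq 0$) and enters with a minus sign, so you may simply discard it; you do not need to argue that it is finite. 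Both proofs then identify the weak $L^2$ limit with $\partial h$ by the distributional convergence $h_\nu\to h$, and both close by citing $\log\det h\in L^1_{\loc}$ from Proposition~\ref{prop:basic}(ii) to make $\theta^h=h^{-1}\partial h$ well defined a.e.
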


For the curvature, however, the situation turns out to be more involved.

\begin{thm}\label{thm:counter} 
Let $\Delta\subset\mathbb{C}$ denote the unit disc and let $E=\Delta\times\mathbb{C}^2$ be the trivial vector bundle over $\Delta$. Let $h$ be the singular hermitian metric,
\begin{displaymath}
h=\left(\begin{array}{cc}
1+|z|^2 & z \\
\bar{z} & |z|^2
\end{array} \right).
\end{displaymath}
Then, $h$ is negatively curved in the sense of Griffiths, as in Definition \ref{df:2}, and the connection matrix of $h$ is given by (see section 3),
\begin{displaymath}
\theta^h:=h^{-1}\partial h=\left(\begin{array}{cc}
\frac{1}{z} & 0 \\
-\frac{1}{z^2} & \frac{1}{z}
\end{array} \right)dz.
\end{displaymath}
Hence, $\theta^h$ is not locally integrable on $\Delta$, and $\Theta^h:=\dbar\theta^h$ is not a current with measure coefficients.
\end{thm}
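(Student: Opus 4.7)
The plan is to verify the three assertions in turn---negative Griffiths curvature, the explicit form of $\theta^h$, and the failure of $\Theta^h$ to be measure-valued---using one algebraic identity and one standard distributional formula.

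For Griffiths negativity in the sense of Definition~\ref{df:2}, it suffices to show that $\|u\|^2_h$ is plurisubharmonic for every local holomorphic section $u=(u_1,u_2)$. The crucial observation, verified by direct expansion, is the identity
\begin{displaymath}
\|u\|^2_h \;=\; |u_1+z u_2|^2 + |z u_1|^2.
\end{displaymath}
Both $u_1+zu_2$ and $zu_1$ are holomorphic whenever $u_1,u_2$ are, so each summand is the squared modulus of a holomorphic function and hence plurisubharmonic; the sum inherits this property.

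For the connection matrix, I would note that $\det h = (1+|z|^2)|z|^2 - z\bar z = |z|^4$, so $h$ is smooth and positive definite on $\Delta\setminus\{0\}$, where the Chern connection matrix $\theta^h = h^{-1}\partial h$ is classical. Inverting $h$ via the cofactor formula, computing $\partial h$, and multiplying reproduces the stated expression on the punctured disc, and by Proposition~\ref{prop:conn} this represents $\theta^h$ almost everywhere on $\Delta$. Since its $(2,1)$-entry is $-dz/z^2$, it is visibly not in $L^1_{\loc}(\Delta)$.

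Finally, to show that $\Theta^h=\bar\partial\theta^h$ has no representation as a matrix of measure-valued currents, I would compute $\bar\partial$ entry by entry in the sense of distributions. The dangerous term is the $(2,1)$-entry: writing $1/z^2 = -\partial_z(1/z)$ and using the standard identity $\bar\partial(1/z) = \pi\delta_0\,d\bar z$ gives
\begin{displaymath}
\bar\partial\Bigl(-\frac{dz}{z^2}\Bigr) \;=\; \pi\,(\partial_z\delta_0)\,d\bar z\wedge dz,
\end{displaymath}
a first derivative of a Dirac mass. Such a distribution has order one and is therefore not representable by a Borel measure. The only step that is not purely mechanical is spotting the decomposition in the first part; everything thereafter is routine matrix algebra together with the standard distributional formula for $\bar\partial(1/z)$.
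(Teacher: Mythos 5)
Your proposal follows the paper's own argument essentially verbatim: the same decomposition $\|u\|^2_h = |u_1+zu_2|^2 + |zu_1|^2$ establishes Griffiths negativity, the same cofactor computation of $h^{-1}$ and $\partial h$ gives the connection matrix, and the same observation that $\bar\partial(-dz/z^2)$ is a first derivative of a point mass (order one, hence not a measure) finishes the argument. The only cosmetic difference is that you invoke the explicit identity $\bar\partial(1/z)=\pi\delta_0\,d\bar z$ where the paper merely remarks that the distribution "can be thought of as the derivative of $\delta_0$"; both are fine.
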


This theorem shows that it is not possible to define the curvature in general, just using Definition \ref{df:2}. The existence of examples such as the metric in Theorem \ref{thm:counter} forces us to disregard the singular part of the metric, which is characterised by the fact that $\det h$ vanishes there. If we impose the additional condition that $\det h>\varepsilon$, for some $\varepsilon>0$, we get the following theorem. (Here and in what follows $\tilde{\Theta}^{h_\nu}:=i\Theta^{h_\nu}(\xi,\xi)$, where $\xi$ denotes an arbitrary smooth vector field.)

\begin{thm}\label{thm:main}
Let $E\to X$ be a holomorphic vector bundle over a complex manifold $X$, and let $h$ be a singular hermitian metric on $E$ that is negatively curved in the sense of Griffiths, as in Definition \ref{df:2}. Let furthermore $\{h_\nu\}$ be any approximating sequence of smooth, hermitian metrics with negative Griffiths curvature, decreasing pointwise to $h$. 

If there exists $\varepsilon>0$ such that $\det h>\varepsilon$, then $\Theta^h:=\dbar\theta^h$ is well-defined and $\tilde{\Theta}^{h_\nu}$ converge weakly to $\tilde{\Theta}^h$ as currents with measurable coefficients.
\end{thm}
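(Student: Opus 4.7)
The plan is to show that $\theta^{h_\nu}\rightharpoonup\theta^h$ weakly in $L^2_{\loc}$, which already gives $\Theta^{h_\nu}\to\Theta^h$ distributionally, and then to upgrade the distributional convergence of $\tilde\Theta^{h_\nu}$ to weak convergence of positive measures using the sign coming from Griffiths negativity. To set the stage: the smooth metrics $h_\nu$ decrease to $h$, so are locally uniformly bounded above, and combined with $\det h>\varepsilon$ this forces $h\geq cI$ on compacts; by matrix monotonicity $h_\nu^{-1}\leq h^{-1}\leq c^{-1}I$ uniformly in $\nu$. Hence $h_\nu$ and $h_\nu^{-1}$ are uniformly in $L^\infty_{\loc}$, converge pointwise to $h$ and $h^{-1}$, and therefore also in $L^p_{\loc}$ for every $p<\infty$ by dominated convergence. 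Proposition \ref{prop:conn} then gives $\theta^h=h^{-1}\partial h\in L^2_{\loc}$, so $\Theta^h:=\dbar\theta^h$ is a well-defined current.

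\textbf{Uniform $L^2$ bound on $\partial h_\nu$.} The proof of Proposition \ref{prop:conn} produces a local $L^2$ bound on $\partial h$ whose constant depends only on local $L^\infty$ bounds on the entries of $h$, via the plurisubharmonicity of the scalar functions $h(u,u)$ (for fixed holomorphic $u$) and Chern--Levine--Nirenberg--type estimates for bounded plurisubharmonic functions (off-diagonal entries are handled by writing $2\Re h_{ij}$ as the difference $h(e_i+e_j,e_i+e_j)-h_{ii}-h_{jj}$, and similarly for the imaginary part). Applied to each $h_\nu$ this bound is uniform in $\nu$, and combined with the distributional convergence $\partial h_\nu\to\partial h$ (immediate from $L^1_{\loc}$-convergence of $h_\nu$) together with weak compactness in $L^2_{\loc}$, it yields $\partial h_\nu\rightharpoonup\partial h$ weakly in $L^2_{\loc}$.

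\textbf{Weak convergence of $\theta^{h_\nu}$.} Decompose
\begin{equation*}
\theta^{h_\nu}-\theta^h \;=\; h_\nu^{-1}(\partial h_\nu-\partial h)\;+\;(h_\nu^{-1}-h^{-1})\partial h.
\end{equation*}
The second summand tends to zero strongly in $L^2_{\loc}$ by dominated convergence (the integrand goes to zero pointwise and is dominated by $2\|h^{-1}\|_\infty|\partial h|\in L^2_{\loc}$), while the first summand tends to zero weakly in $L^2_{\loc}$: weak $L^2$-convergence is preserved under multiplication by a uniformly bounded sequence of functions converging a.e., as one sees by splitting a fixed test form $\phi$ as $h_\nu^{-1}\phi=(h_\nu^{-1}-h^{-1})\phi+h^{-1}\phi$ and handling the two pieces separately. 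Hence $\theta^{h_\nu}\rightharpoonup\theta^h$ in $L^2_{\loc}$ and, taking $\dbar$ distributionally, $\Theta^{h_\nu}\to\Theta^h$ as currents.

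\textbf{From currents to measures; main obstacle.} Each $-\tilde\Theta^{h_\nu}$ is a smooth positive $(1,1)$-form by the Griffiths negativity of $h_\nu$. A distributional limit of positive currents is again positive, hence of order zero with Radon measure coefficients, and on positive currents distributional convergence coincides with weak-$*$ convergence of measures; this simultaneously shows that $\tilde\Theta^h$ has measure coefficients and that $\tilde\Theta^{h_\nu}\to\tilde\Theta^h$ weakly. The only step that is not routine is the uniform $L^2_{\loc}$ bound on $\partial h_\nu$: one needs to read off from the proof of Proposition \ref{prop:conn} an estimate whose constant depends only on local $L^\infty$ bounds of the entries of $h$, and the hypothesis $\det h>\varepsilon$ enters essentially only through the resulting uniform $L^\infty$ control on $h^{-1}$.
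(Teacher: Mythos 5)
Your core argument is essentially the same as the paper's: the mechanism is to (a) get a uniform $L^2_{\loc}$ bound and weak $L^2$ convergence on $\partial h_\nu$ from the proof of Proposition~\ref{prop:conn}, (b) use $\det h>\varepsilon$ to get a uniform $L^\infty_{\loc}$ bound on $h_\nu^{-1}$, and (c) decompose $\theta^{h_\nu}-\theta^h$ into one term handled by dominated convergence and one handled by weak $L^2$ convergence. Your decomposition $h_\nu^{-1}(\partial h_\nu-\partial h)+(h_\nu^{-1}-h^{-1})\partial h$ differs from the paper's $(h_\nu^{-1}-h^{-1})\partial h_\nu+h^{-1}(\partial h_\nu-\partial h)$ only in which cross term is added and subtracted; the paper's is marginally cleaner since it avoids the sub-splitting you need for your first term, but both work. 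Your sketch of the uniform $L^2$ bound via CLN estimates on the bounded psh scalars $h(u,u)$ is a valid alternative route to what the paper actually does in Proposition~\ref{prop:conn} (where the bound on $\|\partial h_\nu\|^2$ comes from the Bochner--Kodaira identity $i\partial\dbar\|u\|^2_{h_\nu}=-\langle i\Theta^{h_\nu}u,u\rangle+\|D'u\|^2_{h_\nu}$ for constant $u$, together with the $L^1_{\loc}$ bound on $\mathrm{tr}\,\tilde\Theta^{h_\nu}$ from $\log\det h$); in any case, the paper explicitly establishes the uniform bound and the weak $L^2$ convergence of $\tilde{\partial h}_\nu$, so you may simply cite it.

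One point is stated imprecisely: $-\tilde\Theta^{h_\nu}$ is not a scalar positive $(1,1)$-form but an $\End(E)$-valued object that is non-negative only with respect to the varying metric $h_\nu$, so the general fact that ``positive currents have measure coefficients and converge weak-$*$'' does not apply entrywise as written. To make your final paragraph rigorous, work instead with the scalar quantity $-\mathrm{tr}\,\tilde\Theta^{h_\nu}\geq 0$, which the paper shows (Remark~\ref{remark:uniTr}) is uniformly bounded in $L^1_{\loc}$, and then use the Cauchy--Schwarz-type bound~\eqref{eq:herm2}, $|(\tilde\Theta^{h_\nu}s,s)_{h_\nu}|\leq -\mathrm{tr}(\tilde\Theta^{h_\nu})\|s\|^2_{h_\nu}$, together with the uniform two-sided bounds on $h_\nu$ and $h_\nu^{-1}$ that you already established, to conclude that the matrix entries of $\tilde\Theta^{h_\nu}$ are uniformly in $L^1_{\loc}$. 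That genuinely yields measure coefficients for the weak limit and is more than the paper's proof makes explicit; it is a worthwhile addition once phrased correctly.
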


If for some reason it is known that $\tilde{\Theta}^{h_\nu}\in L_{loc}^1(X)$ uniformly in $\nu$, then Theorem \ref{thm:main} holds without using the assumption $\det h>\varepsilon$. (A trivial example of this is when the vector bundle is a direct sum of line bundles, i.e. the singular hermitian metric is diagonal.)

As we will soon discuss, despite the rather unpleasant condition $\det h>\varepsilon$, the setting of Theorem \ref{thm:main} is sufficient to prove vanishing theorems for these types of singular vector bundles. Before we turn to this, however, we first note that the following corollary is an immediate consequence of Theorem \ref{thm:main}.

\begin{cor}\label{cor:1}
Assume that $F:=\{\det h=0\}$ is a closed set and assume furthermore that there exists an exhaustion of open sets $\{U_j\}$ of $F^c$ such that $\det h>1/j$ on $U_j$. Then $\Theta^h$ exists as a current on $F^c$.
\end{cor}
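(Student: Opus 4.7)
The plan is to reduce the corollary to a direct application of Theorem \ref{thm:main} on each of the open sets $U_j$ and then glue. Since $F^c$ is open in $X$ and $F^c = \bigcup_j U_j$, the desired current $\Theta^h$ on $F^c$ is a local object, so it suffices to construct it coherently on each $U_j$.

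First, I would fix $j$ and view $U_j$ itself as a complex manifold on which $E$ restricts to a holomorphic vector bundle equipped with a singular hermitian metric that is negatively curved in the sense of Definition \ref{df:2}. On $U_j$ we have the uniform lower bound $\det h > 1/j$, so the hypothesis of Theorem \ref{thm:main} is satisfied with $\varepsilon = 1/j$. By Proposition \ref{prop:basic}(i), on any trivialising polydisc contained in $U_j$ we can produce a decreasing sequence of smooth hermitian metrics $\{h_\nu\}$ with negative Griffiths curvature. Applying Theorem \ref{thm:main} to this data yields that $\Theta^h := \dbar\theta^h$ is a well-defined current with measurable coefficients on $U_j$, and moreover that $\tilde\Theta^{h_\nu}$ converges weakly to $\tilde\Theta^h$.

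Next, I would observe that the construction is \emph{intrinsic}: Proposition \ref{prop:conn} defines $\theta^h = h^{-1}\partial h$ pointwise almost everywhere directly from $h$, with no reference to any approximating sequence, so $\Theta^h = \dbar\theta^h$ depends only on $h$ itself. Consequently, for $j < k$ the current $\Theta^h$ produced on $U_k$ restricts to the one produced on $U_j$, and these local currents glue to a single well-defined current $\Theta^h$ on the union $F^c$.

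The main obstacle here is conceptual rather than technical: one must recognise that Theorem \ref{thm:main} is a local statement (its conclusion concerns currents, which form a sheaf) and that the positive lower bound $\varepsilon$ in its hypothesis is allowed to shrink from one piece of the exhaustion to the next, as long as it is strictly positive on each piece. Once this is observed, the corollary is a formal gluing consequence of Theorem \ref{thm:main} together with the intrinsic nature of $\theta^h$ provided by Proposition \ref{prop:conn}.
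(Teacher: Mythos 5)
Your proof is correct and follows exactly the route the paper intends: the paper simply declares the corollary ``an immediate consequence of Theorem \ref{thm:main},'' and your argument---apply the theorem on each $U_j$ with $\varepsilon = 1/j$, note via Proposition \ref{prop:conn} that $\theta^h = h^{-1}\partial h$ is defined intrinsically from $h$, and glue the resulting currents over the exhaustion---is precisely the spelling-out of that immediacy.
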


From now on we will assume that the assumptions of this corollary are met.

Now for line bundles one of the main theorems concerning singular metrics is the Demailly-Nadel vanishing theorem \cite{D1},\cite{N}. One statement of this theorem is that for forms of appropriate bidegrees with values in a line bundle, it is possible to solve the inhomogenous $\dbar-$equation on any complete K\"ahler manifold, if the metric is strictly positively curved as a current. To prove a corresponding result in the vector bundle case requires an appropriate notion of being curved in the sense of Nakano in the singular setting.

Just as for curvature in the sense of Griffiths, there exists an alternative characterisation of Nakano negativity that we can use in the singular setting (\cite{B} section 2). Namely let $u=(u_1,\ldots,u_n)$ denote an $n-$tuple of holomorphic sections of $E$, and define $T_u^h$, an $(n-1,n-1)-$form through
\be\label{eq:naka}
T_u^h=\sum_{j,k=1}^n(u_j,u_k)_h\widehat{dz_j\wedge d\bar{z}_k}
\ee
where $(z_1,\ldots,z_n)$ are local coordinates and $\widehat{dz_j\wedge d\bar{z}_k}$ denotes the wedge product of all $dz_i$ and $d\bar{z}_i$ except $dz_j$ and $d\bar{z}_k$, multiplied by a constant of absolute value 1, chosen so that $T_u^h$ is a positive form. Then a short computation yields that $h$ is negatively curved in the sense of Nakano if and only if $T_u^h$ is plurisubharmonic in the sense that $i\partial\dbar T_u^h\geq0$; see section 2. Choosing $u_j=u_k=u$ we see that if this requirement is met, $h$ is negatively curved in the sense of Griffiths, as in Definition \ref{df:2}, as well. Hence $\Theta^h$ is well-defined on $F^c$.  

We will adopt the following definition of being strictly negatively curved in the sense of Nakano.

\begin{df}\label{df:3}
We say that a singular hermitian metric $h$ on a vector bundle $E$ is strictly negatively curved in the sense of Nakano if:\\
(i) the $(n-1,n-1)-$form $T_u^h$ given in (\ref{eq:naka}) is plurisubharmonic for any $n-$tuple of holomorphic sections $u=(u_1,\ldots,u_n)$ .\\
(ii) there exists $\delta>0$ such that on $F^c$
\be\label{eq:str_naka}
\sum_{j,k=1}^n\big(\Theta_{jk}^hs_j,s_k\big)_h\leq-\delta\sum_{j=1}^n\|s_j\|^2_h,
\ee
for any $n-$tuple of sections $s=(s_1,\ldots,s_n)$. 
\end{df}

Here $\{\Theta_{jk}\}$ are the matrix-valued distributions defined through
$$\Theta^h=\sum_{j,k=1}^n\Theta_{jk}^hdz_j\wedge d\bar{z}_k$$
and so the expression in (\ref{eq:str_naka}) should be interpreted in the sense of distributions.

In section 5 we prove the following approximation result.

\begin{prop}\label{prop:1}
Let $E\to\Omega$ be a trivial holomorphic vector bundle over a domain $\Omega$ in $\C^n$, and let $h$ be a singular hermitian metric on $E$, that is strictly negatively curved in the sense of Nakano, as in Definition \ref{df:3}. Let furthermore $\{h_\nu\}$ be an approximating sequence of smooth metrics, decreasing pointwise to $h$ on any relatively compact subset of $\Omega$, obtained through convolution of $h$ with an approximate identity.

If $h$ is continuous, then for every $\nu$, $h_\nu$ is also strictly negatively curved in the sense of Nakano, with the same constant $\delta$ in (\ref{eq:str_naka}).
\end{prop}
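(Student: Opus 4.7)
My plan is to rewrite the strict Nakano inequality (\ref{eq:str_naka}) as a distributional inequality purely in terms of the matrix $H$ of $h$ and its derivatives, and then transport it to $h_\nu$ via convolution. Writing $\Theta^h=\bar\partial(H^{-1}\partial H)$ one obtains the elementary identity
\begin{equation*}
H\Theta^h_{j\bar k}=(\partial_{\bar z_k}H)H^{-1}(\partial_{z_j}H)-\partial_{z_j}\partial_{\bar z_k}H,
\end{equation*}
and combining it with $(\partial_{\bar z_k}H)^{*}=\partial_{z_k}H$ (using that $H$ is hermitian) gives
\begin{equation*}
\sum_{j,k}(\Theta^h_{j\bar k}s_j,s_k)_h=\Bigl\|\sum_{j}H^{-1/2}(\partial_{z_j}H)s_j\Bigr\|^2-\sum_{j,k}\langle(\partial_{z_j}\partial_{\bar z_k}H)s_j,s_k\rangle.
\end{equation*}
Hence strict Nakano negativity of $h$ on $F^c$ is equivalent to the distributional inequality
\begin{equation*}
\Bigl\|\sum_{j}H^{-1/2}(\partial_{z_j}H)s_j\Bigr\|^2-\sum_{j,k}\langle(\partial_{z_j}\partial_{\bar z_k}H)s_j,s_k\rangle+\delta\sum_j\langle Hs_j,s_j\rangle\le 0
\end{equation*}
holding for every constant tuple $(s_1,\ldots,s_n)$; continuity of $h$ together with Proposition \ref{prop:conn} guarantees that all three terms are well-defined (note in particular that $\partial H\in L^2_{\rm loc}$ and $H^{-1}$ is continuous on $F^c$, so the quadratic term is in $L^1_{\rm loc}$).

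Next I will convolve this inequality against $\chi_\nu$. The first and third terms are linear in $H$ (or its derivatives), so convolution passes through and replaces $H$ by $H_\nu=\chi_\nu*H$ and $\partial_{z_j}\partial_{\bar z_k}H$ by $\partial_{z_j}\partial_{\bar z_k}H_\nu$. Only the middle, quadratic term is non-trivial: convolution produces $\chi_\nu*\|\sum_j H^{-1/2}(\partial_{z_j}H)s_j\|^2$, whereas strict Nakano negativity for $h_\nu$ at a point $z$ would instead ask for $\|\sum_j H_\nu^{-1/2}(z)(\partial_{z_j}H_\nu)(z)s_j\|^2$ in the same slot. Everything therefore reduces to the pointwise bound
\begin{equation*}
\Bigl\|\sum_{j}H_\nu^{-1/2}(z)(\partial_{z_j}H_\nu)(z)s_j\Bigr\|^2\le\bigl(\chi_\nu*\bigl\|\sum_{j}H^{-1/2}(\partial_{z_j}H)s_j\bigr\|^2\bigr)(z),
\end{equation*}
which, setting $f(w):=\sum_j(\partial_{z_j}H)(z-w)s_j$ and $A(w):=H(z-w)$, is precisely the Jensen-type statement $\langle\bar A^{-1}\bar v,\bar v\rangle\le\int\chi_\nu(w)\langle A(w)^{-1}v(w),v(w)\rangle\,dw$ with $v=f$ and bars denoting averages against $\chi_\nu$.

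This last inequality I would prove via the elementary pointwise bound $\langle A^{-1}v,v\rangle\ge 2\,\mathrm{Re}\langle y,v\rangle-\langle Ay,y\rangle$, valid for every positive hermitian $A$ and every $v,y$ (simply expand $\langle A(y-A^{-1}v),y-A^{-1}v\rangle\ge 0$); integrating against $\chi_\nu$ with $y$ a constant vector and then choosing $y=\bar A^{-1}\bar v$ produces exactly what is needed. Combining all three steps yields strict Nakano negativity of $h_\nu$ with the same constant $\delta$. The main obstacle is the non-linearity of $\Theta^h$ in $H$ through the factor $H^{-1}$, which is precisely why a naive convolution argument fails; the key input that overcomes it is the joint operator convexity of $(A,v)\mapsto\langle A^{-1}v,v\rangle$. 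The continuity hypothesis on $h$ plays only the supporting role of ensuring that the distributional reformulation of the first paragraph is well-posed and that $H^{-1}$ is locally bounded on $F^c$.
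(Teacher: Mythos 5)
Your argument takes a genuinely different route from the paper's, and the key idea is sound, but there is one step that needs more justification than you give it.

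\textbf{Comparison with the paper's proof.} The paper never writes out the curvature identity you use. Instead it works with the $(n-1,n-1)$-form $T_u^h=\sum_{j,k}(u_j,u_k)_h\,\widehat{dz_j\wedge d\bar z_k}$, which is \emph{linear} in $h$, and proves the inequality $i\partial\dbar T_u^h\geq\delta\sum_j\|u_j\|^2_h\,dV$, which has both sides linear in $h$. Convolution then commutes trivially, and one recovers strict Nakano negativity of $h_\nu$ only at the very end, by using that for a \emph{smooth} metric one can choose holomorphic $u_j$ with $D'_{z_j}u_j=0$ at a point. Your proof instead keeps the nonlinear piece $\bigl\|\sum_j H^{-1/2}(\partial_{z_j}H)s_j\bigr\|^2$ visible and controls it via the joint operator convexity of $(A,v)\mapsto\langle A^{-1}v,v\rangle$, which is a nice self-contained observation; the paper's route avoids the nonlinear term entirely by passing to $T^h_u$ (where that term appears with a favorable sign and can simply be dropped). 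The two mechanisms are closely related, but yours makes the convexity that is really at work completely explicit.

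\textbf{The gap.} You assert that strict Nakano negativity of $h$ on $F^c$ ``is equivalent to'' the distributional inequality
\begin{equation*}
\Bigl\|\sum_{j}H^{-1/2}(\partial_{z_j}H)s_j\Bigr\|^2+\delta\sum_j\langle Hs_j,s_j\rangle\le\sum_{j,k}\langle(\partial_{z_j}\partial_{\bar z_k}H)s_j,s_k\rangle,
\end{equation*}
via the ``elementary identity'' $H\Theta^h_{j\bar k}=(\partial_{\bar z_k}H)H^{-1}(\partial_{z_j}H)-\partial_{z_j}\partial_{\bar z_k}H$. That identity is clear for smooth $H$, but as written it is a distributional claim about the singular $h$, and it is not obvious. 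Proposition \ref{prop:conn} gives only \emph{weak} $L^2$ convergence of $\partial h_\nu$, and weak limits do not pass through the quadratic term $\bigl\|\sum H_\nu^{-1/2}(\partial_{z_j}H_\nu)s_j\bigr\|^2$; the Leibniz computation $\dbar(H^{-1}\partial H)=\dbar(H^{-1})\wedge\partial H+H^{-1}\dbar\partial H$ is exactly where this difficulty hides. Fortunately you only ever use the \emph{forward} direction (strict Nakano $\Rightarrow$ distributional inequality), and that implication does hold, by one of two routes you should make explicit: either (a) note that for mollification $\partial h_\nu\to\partial h$ \emph{strongly} in $L^2_{\rm loc}$ (since Proposition \ref{prop:conn} shows $h\in W^{1,2}_{\rm loc}$, and convolution converges strongly in $W^{1,2}_{\rm loc}$), which then legitimates the identity; or (b) use only weak lower semicontinuity of the $L^2$-norm under weak convergence, which gives the inequality $\bigl\|\sum H^{-1/2}(\partial_{z_j}H)s_j\bigr\|^2\le\lim\bigl\|\sum H_\nu^{-1/2}(\partial_{z_j}H_\nu)s_j\bigr\|^2$ and, combined with the weak convergence $h_\nu\Theta^{h_\nu}\to h\Theta^h$ established in the proof of Theorem \ref{thm:main}, yields exactly the one-sided inequality you need. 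You should also say a word about passing from the inequality on $F^c$ to the inequality on all of $\Omega$ before you convolve (the paper does this by observing $F$ has measure zero and the term that survives on $F$ is a positive measure), since otherwise the convolution near the boundary of $F^c$ sees values of $h$ on $F$. With these clarifications your argument is a correct and attractive alternative to the paper's.
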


The continuity of $h$ is needed to make sure that the approximating sequence $\{h_\nu\}$, obtained through convolution, converges uniformly to $h$, which will turn out to be important in the proof.

In section 5 we start by giving a similar definition and approximation result in the simpler Griffiths setting. As the dual of a Griffiths negative bundle is Griffiths positive, the regularisation can also be made in the positive case. In \cite{R} this is used to prove a Demailly-Nadel type of vanishing theorem for vector bundles over complex curves (\cite{R} Theorem 1.3).

Unlike curvature in the sense of Griffiths, the dual of a Nakano negative bundle, in general, is not Nakano positive. Because of this we can not use Definition \ref{df:3} and our regularisation result in the positive setting. Hence for singular metrics a whole new approach to Nakano positivity is probably needed. Unfortunately we have so far failed to come up with an appropriate definition that lends itself well to regularisations.

\section*{Acknowledgments}
\noindent It is a pleasure to thank Bo Berndtsson for inspiring and helpful discussions. I would also like to thank Mark Andrea A. de Cataldo.

\section{Curvature and Positivity}
\noindent Let $X$ be a complex manifold with dim$_\C X=n$, and let $(E,h)$ be a smooth, hermitian, holomorphic vector bundle over $X$ with rank $E=r$. Then we have a well-defined bilinear form which we denote by $\langle,\rangle$, for forms on $X$ with values in $E$ by letting $\langle\alpha\otimes s,\beta\otimes t\rangle:=\alpha\wedge\bar{\beta}\ (s,t)_h$ for forms $\alpha,\beta$ and sections $s,t$, and then extend to arbitrary forms with values in $E$ by linearity.

In the main part of this article we are in a local setting and so we will assume that $X$ is a polydisk $U$ and that $E$ is trivial. Hence we regard $h$ as a matrix-valued function on $U$. If $s$ and $t$ are sections of $E$ we will regard them as vectors of functions so that
$$(s,t)_{h}=t^*hs$$
where $t^*$ is the transpose conjugate of $t$ and juxtaposition denotes matrix multiplication.

We denote the Chern connection associated to the bilinear form $\langle,\rangle$ by $D=D'+\dbar$ and the curvature by $\Theta=D^2=D'\dbar+\dbar D'$. Locally $D'$, and hence $D$, can be represented by a matrix of one-forms $\theta$, and one can show that this matrix is given by $\theta=h^{-1}\partial h$. One can also show that $\Theta$ equals $\dbar\theta=\dbar(h^{-1}\partial h)$, i.e. the curvature can locally be represented as a matrix of two-forms. Thus if we let $\{z_1,\ldots,z_n\}$ denote local coordinates on $X$, we have that
\be\label{eq:curv}
\Theta=\sum_{j,k=1}^n\Theta_{jk}dz_j\wedge d\bar{z}_k
\ee
where $\Theta_{jk}$ are $r\times r$ matrix-valued functions on $X$.

Throughout this article, unless explicitly stated otherwise, we will use the $\sim$ symbol to denote form-valued objects acting on some arbitrary, smooth vector field $\xi$. For example, $\tilde{\theta}=\theta(\xi)$, $\tilde{\partial h}=\partial h(\xi)$ and so on. However, for the curvature tensor we let $\tilde{\Theta}:=i\Theta(\xi,\xi)$. 

Now there are two main notions of positivity for holomorphic vector bundles. We say that $(E,h)$ is strictly positively curved in the sense of Griffiths if for some $\delta>0$
$$\big(\tilde{\Theta}s,s\big)_h\geq\delta\|s\|^2_h|\xi|^2$$
for any section $s$ of $E$, and any smooth vector field $\xi$. Using (\ref{eq:curv}) we see that this is equivalent to
$$\sum_{j,k=1}^n\big(\Theta_{jk}s,s\big)_h\xi_j\bar\xi_k\geq\delta\|s\|^2_h|\xi|^2$$
for any vector $\xi\in\C^n$.

We say that $E$ is strictly positively curved in the sense of Nakano if for some $\delta>0$
\be\label{eq:nak}
\sum_{j,k=1}^n\big(\Theta_{jk}s_j,s_k\big)_h\geq\delta\sum_{j=1}^n\|s_j\|^2_h
\ee
for any $n$-tuple $(s_1,\ldots,s_n)$ of sections of $E$. Griffiths and Nakano semipositivity, seminegativity and strict negativity are defined similarly.

Choosing $s_j=\xi_js$ in (\ref{eq:nak}) it is immediate that being positively or negatively curved in the sense of Nakano implies being positively or negatively curved in the sense of Griffiths. The converse however, does not hold in general. Of these two main positivity concepts Griffiths positivity has the nicest functorial properties in that if $E$ is positively curved in the sense of Griffiths, then the dual bundle $E^*$ has negative Griffiths curvature. This property will be very useful for us as it allows us to study metrics with positive and negative Griffiths curvature interchangeably. Unfortunately this correspondence between $E$ and $E^*$ does not hold in the Nakano case. The reason for studying Nakano positivity is that it is intimately related with the solvability of the inhomogeneous $\dbar$-equation using $L^2$ methods.

The smoothness of $h$ is central in defining the curvature, and hence also in being curved in the sense of Griffiths and Nakano. However, as we will be dealing with singular metrics, these definitions will not work for us. Instead the following alternative characterizations will be useful.

Let $u$ be an arbitrary holomorphic section of $E$. Then a short computation yields
\be\label{eq:psh}
i\partial\dbar\|u\|_{h}^2=-\langle i\Theta u,u\rangle_{h}+i\langle D^\prime u,D^\prime u\rangle_{h}\geq-\langle i\Theta u,u\rangle_{h}
\ee
Hence we see that if the curvature is negative in the sense of Griffiths, then $\|u\|^2_{h}$ is plurisubharmonic. On the other hand we can always find a holomorphic section $u$ such that $D^\prime u=0$ at a point. Thus $h$ is negatively curved in the sense of Griffiths if and only if $\|u\|^2_{h}$ is plurisubharmonic, for any holomorphic section $u$.

In exactly the same way one can also show that $h$ is negatively curved in the sense of Griffiths if and only if $\log\|u\|_h^2$ is plurisubharmonic for every holomorphic section $u$. Alternatively, one can obtain this from the well-known fact that for a positive valued function $v$, $\log v$ is plurisubharmonic if and only if $ve^{2\Re q}$ is plurisubharmonic for every polynomial $q$. Choosing $v=\|u\|^2_h$ we get that $\|u\|^2_he^{2\Re q}=\|ue^q\|^2_h$ which is plurisubharmonic as $ue^q$ also is a holomorphic section, for every polynomial $q$.

Now turning to curvature in the sense of Nakano, we let $u=(u_1,\ldots,u_n)$ denote an $n$-tuple of holomorphic sections of $E$ and define $T_u$, an $(n-1,n-1)$-form through
$$T_u=\sum_{j,k=1}^n(u_j,u_k)_h\widehat{dz_j\wedge d\bar z_k}$$
where $(z_1,\ldots,z_n)$ are local coordinates on $X$, and $\widehat{dz_j\wedge d\bar z_k}$ denotes the wedge product of all $dz_i$ and $d\bar z_i$ except $dz_j$ and $d\bar z_k$, multiplied by a constant of absolute value 1, chosen so that $T_u$ is a positive form. Then we have that
{\setlength\arraycolsep{2pt}
\begin{eqnarray}
i\partial\dbar T_u & = & -\sum_{j,k=1}^n\big(\Theta_{jk}u_j,u_k\big)_hdV+\Big\|\sum_{j=1}^nD'_{z_j}u_j\Big\|^2_hdV\geq\nonumber\\
&\geq&-\sum_{j,k=1}^n\big(\Theta_{jk}u_j,u_k\big)_hdV,\nonumber
\end{eqnarray}}
\!\!where $dV:=i^ndz_1\wedge d\bar z_1\wedge\ldots\wedge dz_n\wedge d\bar z_n$ and $D^\prime_{z_j}:=D'_{\partial/\partial z_j}$. Hence analogous to the previous case, we see that if the curvature has negative Nakano curvature, then $T_u$ is a plurisubharmonic $(n-1,n-1)-$form, and on the other hand we can always find holomorphic sections $\{u_j\}$ such that $D_{z_j}'u_j=0$ at a point, for $j=1,\ldots,n$. Thus $T_u$ is a plurisubharmonic $(n-1,n-1)-$form if and only if $h$ is negatively curved in the sense of Nakano.

\section{Comparison with de Cataldo and Theorem \ref{thm:counter}}
\noindent As noted in the introduction, singular hermitian metrics have been introduced and studied previously by Mark Andrea de Cataldo in \cite{C}. However despite the almost identical titel, the purpose and contents of \cite{C} differ quite a lot from ours. In the introduction of \cite{C} it is stated that the main goal is to study global generation problems, (that had previously been studied by Demailly and Siu, among others, in the line bundle case), in the vector bundle setting. For this the 'analytic package' of the higher rank analogues of singular hermitian metrics, regularisation-approximation, $L^2$ estimates, and the Demailly-Nadel vanishing theorem are needed.

However as the main focus in \cite{C} is on algebraic-geometric aspects, and not on the technical regularisation procedures, all such approximation results are taken as part of the definitions. Thus $h$ is defined to be a singular hermitian metric on a vector bundle $E$ over a manifold $X$, if there exists a closed set $\Sigma\subseteq X$ of measure zero and a sequence of (smooth) hermitian metrics $\{h_s\}$ such that $\lim_{s\to\infty}h_s=h$ in the $C^2_{loc}-$topology on $X\setminus\Sigma$. The curvature tensor associated to $h$ is defined to be the curvature tensor of $h$ restricted to $X\setminus\Sigma$; (\cite{C} Defintion 2.1.1).

Immediately after this, (in \cite{C} section 2), this definition is discussed in the line bundle setting. There it is recalled that if $h=h_0e^{-2\varphi}$ is a singular hermitian metric on a line bundle $L$, where $h_0$ is a (smooth) hermitian metric on $L$ and $\varphi$ is a locally integrable function on $X$, then
$$\Theta^h(L)=\Theta^{h_0}(L)+2i\partial\dbar\varphi_{ac}+2i\partial\dbar\varphi_{sing}.$$
Here $2i\partial\dbar\varphi_{ac}$ has locally integrable coefficients and $2i\partial\dbar\varphi_{sing}$ is supported on some closed set $\Sigma$ of measure zero. It is then remarked that in the sense of Definition 2.1.1,
$$\Theta^h(L)=\Theta^{h_0}(L)+2i\partial\dbar\varphi_{ac},$$
i.e. $2i\partial\dbar\varphi_{sing}$ will not be taken into consideration.

For us, finding a current corresponding to $2i\partial\dbar\varphi_{sing}$ in the vector bundle setting has been on of the main motivations behind this work.

Now as Defintion \ref{df:1} is very liberal, more requirements are needed in order to be able to reach any interesting conclusions.

First off, we need our vector bundles to be holomorphic, since we in a holomorphic frame have explicit expressions for the connection and curvature in terms of the metric. Secondly, in practice some sort of positivity condition on the curvature is usually needed. Hence the idea has been to require this from the start, as in Definition \ref{df:2}, and then try to define the curvature tensor as a matrix of positive (or negative) measures. Moreover since $\Theta=\dbar\theta$, for this to work we need that the entries of $\theta$ are locally integrable. However the simple counter example of Theorem \ref{thm:counter} showes that this is not always possible. Let us study this example in more detail.

\begin{proof}[Proof of Theorem \ref{thm:counter}]
Recall that the metric is given by
\begin{displaymath}
h=\left(\begin{array}{cc}
1+|z|^2 & z \\
\bar{z} & |z|^2
\end{array} \right).
\end{displaymath}
One can check that for any holomorphic section $u(z)=\big(u_1(z),u_2(z)\big)$ of $E=\Delta\times\C^2$,
$$\|u\|^2_h=|zu_1(z)|^2+|u_1(z)+zu_2(z)|^2$$
which is subharmonic, so $h$ has negative Griffiths curvature in the sense of Definition \ref{df:2}. 

It is now straightforward to verify that
\begin{displaymath}
\partial h=\left(\begin{array}{cc}
\bar{z} & 1 \\
0 & \bar{z}
\end{array} \right)dz,
\end{displaymath}
and that
\begin{displaymath}
h^{-1}=\frac{1}{|z|^4}\left(\begin{array}{cc}
|z|^2 & -z \\
-\bar{z} & 1+|z|^2
\end{array} \right).
\end{displaymath}
Hence a short calculation yields that the connection matrix corresponding to $h$ is
\begin{displaymath}
\theta^h:=h^{-1}\partial h=\frac{1}{|z|^4}\left(\begin{array}{cc}
\bar{z}|z|^2 & 0 \\
-\bar{z}^2 & \bar{z}|z|^2
\end{array} \right)dz=\left(\begin{array}{cc}
\frac{1}{z} & 0 \\
-\frac{1}{z^2} & \frac{1}{z}
\end{array} \right)dz.
\end{displaymath}
This is clearly not locally integrable on $\Delta$, and furthermore, we see that $\dbar$ of the non-integrable element can be thought of as the derivative of $\delta_0$, which is a distribution of order one. Hence $\Theta^h$ can not be defined in this way as a form valued matrix of measures.
\end{proof}

The conclusion is that at least our approach using holomorphic frames, i.e. trying to define $\Theta^h$ through $\dbar(h^{-1}\partial h)$, simply can not work. It is quite possible that one might be able to achieve this using an orthogonal frame instead, i.e. by trying to define the curvature through $\Theta=d\theta+\theta\wedge\theta$. The problem with this approach is how to make sense of the concept of an orthogonal frame. Recall that in the smooth setting this is just a set of sections that are linearly independent at every point and orthogonal with respect to the metric, but for a singular metric $h$, it is not clear what this means on the singular locus of $h$. For this reason we had to impose the extra condition $\det h>\varepsilon$ for some $\varepsilon>0$, as the singular locus of $h$ is characterised by the fact that $\det h$ vanishes there.

Now as previously mentioned, a Demailly-Nadel type of vanishing theorem in the vector bundle setting is one of the goals in \cite{C}. In order to generalise the line bundle proof (\cite{D1} Th\'eor\`eme 5.1) the notion of strict positivity in the sense of Nakano is needed. Furthermore one also needs to be able to approximate a singular hermitian metric that is strictly Nakano positive, with a sequence of smooth hermitian metrics that are also strictly positively curved in the sense of Nakano.

In \cite{C} this is basically taken as part of the definition, i.e. a hermitian metric $h$ which is singular in the sense of Definition 2.1.1 is said to be strictly positively curved if there exists a sequence of strictly positively curved curvature tensors approximating $\Theta^h$; (\cite{C} Definition 2.4.1). (This is a simplification. The actual definition is more technical, but, as stated in the beginning of section 2.4, the idea is to incorporate the requirements needed to obtain $L^2$-estimates-type results into the definition.)

Clearly this also is quite different from our approach as one of our main goals has been to try to come up with definitions of being strictly positively or strictly negatively curved in the sense of Griffiths and Nakano, in the singular setting, that are possible to regularise without this being part of the definitions. 

\section{Basic Properties}
\noindent In this section we prove Proposition \ref{prop:basic}(ii) and \ref{prop:conn}. (Part (i) can be found in \cite{BP}, Proposition 3.1. We also prove it in the first part of the proof of Proposition \ref{prop:4.1} below.) Recall that we use $\sim$ to denote form-valued objects acting on some arbitrary, smooth vector field $\xi$. For example, $\tilde{\theta}^{h_\nu}=\theta^{h_\nu}(\xi)$, $\tilde{\partial h_\nu}=\partial h_\nu(\xi)$ and so on. Also $\tilde{\Theta}^{h_\nu}:=i\Theta^{h_\nu}(\xi,\xi)$.

\begin{proof}[Proof of Proposition \ref{prop:basic}(ii)] From part (i) of the proposition we know that on any polydisc, there exists a sequence of smooth hermitian metrics $\{h_\nu\}$ on $E$, with negative Griffiths curvature, decreasing pointwise to $h$ on any smaller polydisc. This sequence induces a sequence of metrics $\{\det h_\nu\}$ on the line bundle $\det E$. Furthermore the curvature of the induced metric $\det h_\nu$ is the trace of the corresponding curvature $\Theta^{h_\nu}$, hence negative. Since $\det h_\nu$ is a negatively curved metric on a line bundle, we know that $\det h_\nu=e^{\varphi_\nu}$ where $\varphi_\nu$ is plurisubharmonic, i.e. $\log\det h_\nu$ is a plurisubharmonic function.

As $\{h_\nu\}$ is a decreasing sequence, $\{\det h_\nu\}$ will be decreasing as well. One way to see this is to regard $\det h_\nu$ as a quotient of volumes through the change of variables formula for integrals. Another way is to use the fact that given any two hermitian matrices it is always possible to find a basis in which both matrices are diagonal. Hence $\{\log\det h_\nu\}$ is a decreasing sequence of plurisubharmonic functions and then it is a well known fact that the limit function, $\log\det h$, will be plurisubharmonic as well, (or identically  equal to minus infinity).
\end{proof}

\begin{remark}\label{remark:uniTr}
From this proof we also get that if $\det h\not\equiv0$, then $tr(\tilde{\Theta}^{h_\nu})\in L^1_{loc}(X)$ uniformly in $\nu$. This follows from the observation that the curvature of the metric $\det h_\nu$ is the trace of $\Theta^{h_\nu}$, and the well-known fact that $i\partial\dbar$ of a plurisubharmonic function is a locally finite measure.\qed
\end{remark}

We now turn to the proof of Proposition \ref{prop:conn}. Let $h$ denote a singular hermitian metric with negative Griffiths curvature in the sense of Definition \ref{df:2}, and let $\{h_{\nu}\}$ be an approximating sequence. Just as in Theorem \ref{thm:main} we do not assume that $h_\nu$ is the convolution of $h$ with an approximate identity, but merely that $\{h_\nu\}$ is any sequence of smooth hermitian metrics with negative Griffiths curvature decreasing pointwise to $h$.

Since each $\tilde{\Theta}^{h_\nu}$ is hermitian with respect to $h_\nu$, there is a basis of eigenvectors $\{v_j\}$ that are orthonormal with respect to $h_\nu$. Hence expanding any section $s$ of $E$ in terms of this basis we get that
$$\big(\tilde{\Theta}^{h_\nu}s,s\big)_{h_\nu}=\sum_{j,k=1}^n\big(\lambda_ja_jv_j,a_kv_k\big)_{h_\nu}=\sum_{j=1}^n\lambda_j|a_j|^2.$$ 
Moreover, each $h_\nu$ being negatively curved in the sense of Griffiths by definition means that $\tilde{\Theta}^{h_\nu}$ is negative definite with respect to $\{h_\nu\}$. Thus the eigenvalues $\{\lambda_j\}$ are all negative. In particular we have that
\be\label{eq:herm2}
\big|\big(\tilde{\Theta}^{h_\nu}s,s\big)_{h_\nu}\big|\leq\max_{j=1,\ldots,n}|\lambda_j|\|s\|^2_{h_\nu}\leq-tr(\tilde{\Theta}^{h_\nu})\|s\|^2_{h_\nu}.
\ee
This observation will be of importance in the proof of Proposition \ref{prop:conn} below. Before we turn to this proof, however, we begin with the following simple lemma, that will turn out to be useful as well.

\begin{lma}\label{lma:wconv}
Let $\{f_\nu\}$ be a sequence of functions converging weakly in the sense of distributions to a function $f$. If $f_\nu\in L^2_{loc}$ uniformly in $\nu$, then in fact $f_\nu$ converges weakly in $L^2$ to $f$.
\end{lma}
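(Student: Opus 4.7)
The plan is to combine the uniform $L^2_{loc}$-bound with the sequential Banach--Alaoglu theorem to extract weakly convergent subsequences in $L^2$, and then use density of test functions to identify the weak $L^2$-limit with the distributional limit $f$. A standard subsequence-of-subsequence argument then promotes the statement from subsequences to the whole sequence.

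More concretely, I would fix a relatively compact open set $U \Subset X$. By hypothesis, $\|f_\nu\|_{L^2(U)}$ is bounded uniformly in $\nu$. Since $L^2(U)$ is a Hilbert space, the Banach--Alaoglu theorem (in its sequential form, valid because $L^2(U)$ is reflexive and separable) produces a subsequence $\{f_{\nu_k}\}$ and some $g \in L^2(U)$ such that $f_{\nu_k} \to g$ weakly in $L^2(U)$. For any $\varphi \in C_c^\infty(U)$ we have $\varphi \in L^2(U)$, so weak $L^2$-convergence gives
\begin{equation}
\int f_{\nu_k}\,\varphi \longrightarrow \int g\,\varphi. \nonumber
\end{equation}
On the other hand, weak distributional convergence yields $\int f_{\nu_k}\varphi \to \int f\varphi$. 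Comparing the two, $\int(g-f)\varphi = 0$ for every $\varphi \in C_c^\infty(U)$, hence $g = f$ almost everywhere on $U$. In particular $f \in L^2_{loc}(X)$.

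To upgrade from a subsequence to the full sequence, I would argue by contradiction. Suppose $f_\nu$ does not converge to $f$ weakly in $L^2_{loc}$. Then there exists $h \in L^2$ with compact support contained in some $U \Subset X$, a number $\delta > 0$, and a subsequence $\{f_{\nu_j}\}$ such that $|\int(f_{\nu_j}-f)h|\geq \delta$ for all $j$. Applying the previous argument to $\{f_{\nu_j}\}$ extracts a further subsequence converging weakly in $L^2(U)$ to the same limit $f$, contradicting the estimate. The main (minor) subtlety is simply to make sure one exhausts $X$ by a countable family of relatively compact opens and runs a diagonal extraction so that the resulting subsequence works simultaneously on all of $X$; this is entirely routine once one has the single-$U$ argument.
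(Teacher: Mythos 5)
Your proof is correct, but it takes a genuinely different route from the paper's. The paper argues directly: given $\phi \in L^2_{loc}$, approximate it in $L^2$ by smooth compactly supported $\phi_\mu$, and estimate
\begin{equation}
\Bigl|\int \phi(f_\nu - f)\Bigr| \leq \Bigl|\int \phi_\mu(f_\nu - f)\Bigr| + \|\phi - \phi_\mu\|_{L^2}\,\|f_\nu - f\|_{L^2},
\nonumber
\end{equation}
where the second term is controlled by the uniform $L^2$ bound on $f_\nu$ (together with the observation that $f$ itself lies in $L^2_{loc}$, which follows from the uniform bound and distributional convergence via Riesz representation). Letting $\nu \to \infty$ first and then $\mu \to \infty$ finishes the argument. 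This is a quantitative density-plus-Cauchy--Schwarz argument and never extracts any subsequence. Your proposal instead goes through sequential Banach--Alaoglu to produce weak $L^2$ limits along subsequences, identifies them with $f$ by testing against $C_c^\infty$, and then upgrades to the full sequence by a contradiction argument. What your approach buys is conceptual clarity: the compactness principle ``bounded in a reflexive space $\Rightarrow$ weakly precompact'' does all the heavy lifting. What the paper's approach buys is economy: no compactness theorem, no subsequence extraction, and the identification of the limit is built into the two-limit estimate rather than requiring a separate uniqueness argument. One small remark on your write-up: the diagonal extraction over an exhaustion of $X$ that you flag at the end is not actually needed for the contradiction step, since the test function $h$ has compact support in a single $U \Subset X$ and the argument is purely local there.
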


\begin{proof}
Let $\phi\in L^2_{loc}$. We want to show that
$$\int\phi f_\nu\to\int\phi f\qquad\textrm{as}\quad\nu\to\infty,$$
and we know that this holds if $\phi\in C^\infty_c$.

By taking the convolution of $\phi$ with an approximate identity, we get a sequence $\{\phi_\mu\}$ of smooth functions of compact support, converging to $\phi$ in $L^2$. Furthermore, as $f_\nu\in L^2_{loc}$ uniformly in $\nu$ and the $L^2$-norm of a weakly convergent sequence decreases, we have that $f\in L^2_{loc}$ as well. Thus by the Cauchy-Schwartz inequality
{\setlength\arraycolsep{2pt}
\begin{eqnarray}
|\int\phi(f_\nu-f)|&\leq&|\int\phi_\mu(f_\nu-f)|+\|\phi-\phi_\mu\|_{L^2}\|f_\nu-f\|_{L^2}\leq\nonumber\\
&\leq&|\int\phi_\mu(f_\nu-f)|+C\|\phi-\phi_\mu\|_{L^2}.\nonumber
\end{eqnarray}}
\!\!Taking the limit first in $\nu$, and then in $\mu$, finishes the proof of the lemma.
\end{proof}

\begin{proof}[Proof of Proposition \ref{prop:conn}]
As observed in the Remark \ref{remark:uniTr}, $tr(\tilde{\Theta}^{h_\nu})\in L^1_{loc}(X)$ uniformly in $\nu$. Thus from (\ref{eq:herm2}) we get that $(\tilde{\Theta}^{h_\nu}u,u)_{h_\nu}\in L^1_{loc}(X)$ uniformly in $\nu$ as well.

If $u$ is assumed to be holomorphic, (\ref{eq:psh}) yields
$$i\partial\dbar\|u\|^2_{h_\nu}(\xi,\xi)=-(\tilde{\Theta}^{h_\nu}u,u)_{h_\nu}+\|\tilde{D}'_{h_\nu}u\|^2_{h_\nu}.$$
For any test form $\phi$ of bidegree $(n-1,n-1)$ partial integration gives
$$\int\phi\wedge i\partial\dbar\|u\|^2_{h_\nu}=\int\|u\|^2_{h_\nu}i\partial\dbar\phi.$$
As $\|u\|^2_{h_\nu}$ decreases pointwise to $\|u\|^2_h$, which in turn is assumed to be plurisubharmonic, we get that $i\partial\dbar\|u\|^2_{h_\nu}(\xi,\xi)\in L^1_{loc}(X)$ uniformly in $\nu$. Thus $\|\tilde{D}'_{h_\nu}u\|^2_{h_\nu}\in L^1_{loc}(X)$ uniformly in $\nu$ as well.

Now for constant $u$,
$$\|\tilde{D}'_{h_\nu}u\|^2_{h_\nu}=(\tilde{\theta}^{h_\nu}u)^*h_\nu(\tilde{\theta}^{h_\nu}u)=u^*(\tilde{\partial h}_\nu)^*h_\nu^{-1}(\tilde{\partial h}_\nu)u$$
and so $tr((\tilde{\partial h}_\nu)^*h_\nu^{-1}(\tilde{\partial h}_\nu))\in L^1_{loc}(X)$ uniformly in $\nu$. Since the sequence $\{h_\nu\}$ is locally bounded from above, the sequence $\{h_\nu^{-1}\}$ will be locally bounded from below. Together these facts yield
$$tr\big((\tilde{\partial h}_\nu)^*h_\nu^{-1}(\tilde{\partial h}_\nu)\big)\geq Ctr\big((\tilde{\partial h}_\nu)^*(\tilde{\partial h}_\nu)\big)=C\|\tilde{\partial h}_\nu\|^2_{HS}$$
where the norm denotes the Hilbert-Schmidt norm. One way to see this is to note that as the statement is pointwise, there is no loss of generality in assuming that $h_\nu^{-1}$ is diagonal. Hence $\tilde{\partial h}_\nu\in L^2_{loc}(X)$ uniformly in $\nu$.

For constant $u$ we furthermore have that
$$\partial\|u\|^2_{h_\nu}(\xi)=(\tilde{D}_{h_\nu}'u,u)_{h_\nu}=u^*h_\nu\tilde{\theta}^{h_\nu}u=u^*\tilde{\partial h}_\nu u,$$
and as the left hand side converges weakly in the sense of distributions to $\partial\|u\|^2_h(\xi)$, we can deduce that $\tilde{\partial h}_{\nu}$ converges weakly to $\tilde{\partial h}$ through polarization. Since we moreover just have shown that the $L^2$-norms are bounded, by Lemma \ref{lma:wconv} the convergence is in fact weakly in $L^2$.  

It is a well-known fact that the $L^p$ norm of the limit of a weakly convergent sequence is smaller than the $L^p$ norm of the elements in the sequence, and so these results altogether yield that $\partial h$ is an $L^2$ valued form.

Finally we also know that $\log\det h\in L^1_{loc}(X)$ so that $\det h\neq0$ a.e. Hence $\theta^h:=h^{-1}\partial h$ is well-defined a.e. 
\end{proof}

\section{Proof of Theorem \ref{thm:main}}
\noindent In this section $h$ will always denote a singular hermitian metric with negative Griffiths curvature in the sense of Definition \ref{df:2}, and $\{h_{\nu}\}$ will denote an approximating sequence. Note that in Theorem \ref{thm:main} we are \textit{not} assuming that $h_\nu$ is the convolution of $h$ with an approximate identity, but merely that $\{h_\nu\}$ is any sequence of smooth hermitian metrics with negative Griffiths curvature decreasing pointwise to $h$. Recall that we use $\sim$ to denote form-valued objects acting on some arbitrary, smooth vector field $\xi$. For example, $\tilde{\theta}^{h_\nu}=\theta^{h_\nu}(\xi)$, $\tilde{\partial h_\nu}=\partial h_\nu(\xi)$ and so on. Also $\tilde{\Theta}^{h_\nu}:=i\Theta^{h_\nu}(\xi,\xi)$. Furthermore, as Theorem \ref{thm:main} is local in nature, we will think of $h$, $h_\nu$, $\theta^{h_\nu}$ etc. as matrix-valued functions on some polydisc $U$.

The idea for the proof of Theorem \ref{thm:main} is to use the condition $\det h>\varepsilon$ to show that:\vspace{0.1cm}\\
(i) $\tilde{\theta}^{h_\nu}\in L^2_{loc}(X)$, uniformly in $\nu$.\\
(ii) $\tilde{\theta}^{h_\nu}$ converges weakly to $\tilde{\theta}^h$ in $L^2$.\vspace{0.1cm}\\
(In (ii) we mean that after choosing a basis for $E$ and representing $\tilde{\theta}^{h_\nu}$ as a matrix, each matrix element converges weakly). As the $L^p$ norm of the limit of a weakly convergent sequence is smaller than the $L^p$ norm of the elements in the sequence, this in turn implies that $\tilde{\theta}^h\in L^2_{loc}(X)$ as well. Thus we can deduce that $\Theta^h:=\dbar\theta^h$ is well-defined in the sense of currents, and that the sequence of curvature tensors $\tilde{\Theta}^{h_\nu}$ converges weakly to $\tilde{\Theta}^h$.

The proof of (i) is immediate. Let $\hat{h}$ denote the adjugate of $h$, i.e. $h^{-1}=(\det h)^{-1}\hat{h}$. The assumption $\det h>\varepsilon$ implies that $\det h_\nu>\varepsilon$ and so $h_\nu^{-1}=(\det h_\nu)^{-1}\hat{h}_\nu<\frac{C}{\varepsilon}I$ as the entries of $\hat{h}_\nu$ are just polynomials of the entries of $h_\nu$, i.e. locally bounded from above.

Hence
$$\int\|\tilde{\theta}^{h_\nu}\|^2_{HS}\leq\frac{C}{\varepsilon^2}\int\|\tilde{\partial h}_\nu\|^2_{HS}$$
and so by the proof of Proposition \ref{prop:conn}, $\tilde{\theta}^{h_\nu}\in L^2_{loc}(X)$ uniformly in $\nu$.

Part (ii) is only slightly more difficult. We want to show that for any test function $\chi\in L^2_{loc}(X)$,
$$\int\chi(\tilde{\theta}^{h_\nu}_{jk}-\tilde{\theta}^{h}_{jk})=\int\chi(h_\nu^{-1}\tilde{\partial h}_\nu-h^{-1}\tilde{\partial h})_{jk}\to0\quad\textrm{as}\quad\nu\to\infty.$$
By adding and subtracting the term $(h^{-1}\tilde{\partial h}_\nu)_{jk}$ we get
$$|\int\chi(\tilde{\theta}^{h_\nu}_{jk}-\tilde{\theta}^{h}_{jk})|\leq|\int\chi\big((h_\nu^{-1}-h^{-1})\tilde{\partial h}_\nu\big)_{jk}|+|\int\chi\big(h^{-1}(\tilde{\partial h}_\nu-\tilde{\partial h})\big)_{jk}|.$$
Now the first term converges to zero since by the Cauchy-Schwartz inequality
$$|\int\chi\big((h_\nu^{-1}-h^{-1})\tilde{\partial h}_\nu\big)_{jk}|^2\leq C\int_K\|h_\nu^{-1}-h^{-1}\|^2_{HS}\int_K\|\tilde{\partial h}_\nu\|^2_{HS},$$ 
where $K$ denotes a compact subset of $X$. We know, from Proposition \ref{prop:conn}, that the second factor is bounded uniformly in $\nu$. Furthermore, as previously noted, the assumption $\det h>\varepsilon$ makes $h_\nu^{-1}$ bounded from above. Hence the first factor converges to zero by the dominated convergence theorem. 

For the second term we note that as $h^{-1}$ is bounded from above, we have in particular that $h^{-1}\in L^2_{loc}(X)$. From the proof of Proposition \ref{prop:conn} we know that $\tilde{\partial h_\nu}$ converges weakly in $L^2$ to $\tilde{\partial h}$. Thus the second term goes to zero as well, and we are done.\qed\vspace{0.5cm}

We include a proof of the fact that if we already know that $\tilde{\Theta}^{h_\nu}\in L^1_{loc}(X)$ uniformly in $\nu$, then we can prove Theorem \ref{thm:main} without using the assumption $\det h>\varepsilon$.

The argument becomes much more involved in this case, although the main idea is the same as above. For simplicity we will only treat the one dimensional case. The several variable version is similar but requires a more advanced integral representation formula.

It turns out that (i) is a little too much to hope for. Instead we will replace it with:\vspace{0.1cm}\\
(i)': $\tilde{\theta}^{h_\nu}\in L^p_{loc}(X)$, uniformly in $\nu$, for $1<p<2$.

Let $\Delta$ denote a disc in $\C$ and assume that $E=\Delta\times\C^r$. After choosing a basis for $E$ we can represent $\tilde{\theta}^{h_\nu}$ and $\tilde{\Theta}^{h_\nu}$ as matrices, and we will denote different elements of these matrices by $\tilde{\theta}^{h_\nu}_{jk}$ and $\tilde{\Theta}^{h_\nu}_{jk}$. In this notation, Cauchy's formula yields that
$$\tilde{\theta}^{h_\nu}_{jk}(z)=C\!\!\int\frac{\chi(\zeta)}{\zeta-z}\tilde{\Theta}^{h_\nu}_{jk}(\zeta)d\lambda(\zeta)+C'\!\!\int\frac{\dbar\chi(\zeta)\land\tilde{\theta}^{h_\nu}_{jk}(\zeta)}{\zeta-z}=:f^\nu_{jk}(z)+g^\nu_{jk}(z)$$
for some bump function $\chi$ which we choose such that $\chi\equiv1$ in a neighborhood of $z$ so that $g^\nu_{jk}$ is holomorphic on $\Delta$. 

Now by Jensen's inequality, for any compact subset $K$ of $\Delta$
$$\int_K|f^\nu_{jk}(z)|^p\leq C\int_K\left(\int\frac{\chi^p}{|\zeta-z|^p}\tilde{\Theta}^{h_\nu}_{jk}(\zeta)d\lambda(\zeta)\right)$$
which is integrable for $1<p<2$ uniformly in $\nu$ since $|\zeta-z|^{-p}\in L^p_{loc}(\Delta)$ for $1<p<2$. 

For $g^{\nu}_{jk}$ we proceed in two steps. First we assume that rank$E=1$ so that $h_\nu$ is not matrix-valued. Then
$$h_\nu f^\nu+h_\nu g^\nu=h_\nu\tilde{\theta}^{h_\nu}=\tilde{\partial h}_\nu\in L^2_{loc}(\Delta)$$
and since $h_\nu$ is locally bounded from above, $h_\nu f^\nu\in L^p_{loc}(\Delta)$ and so $h_\nu g^\nu\in L^p_{loc}(\Delta)$ uniformly in $\nu$ for $1<p<2$. By Jensen's inequality once again
$$\exp\left(C\int_K\log|h_\nu g^\nu|\right)\leq C\int_K|h_\nu g^\nu|$$
so
$$\int_K\log|h_\nu g^\nu|<\infty.$$
Since $\log|h_\nu g^\nu|=\log h_\nu+\log|g^\nu|$ and we know that $\log h_\nu$ is subharmonic, we get that
$$\int_K\log|g^\nu|<\infty$$
uniformly in $\nu$. However we also know that $g^\nu$ is holomorphic so $\log|g^\nu|$ is also subharmonic. Thus by the sub-mean inequality $g^\nu$ is bounded and so if rank$E=1$, $\tilde{\theta}^{h_\nu}\in L^p_{loc}(\Delta)$ uniformly in $\nu$ for $1<p<2$.

For general, matrix-valued $h_\nu$ it follows as in the one dimensional case that $h_\nu g^\nu\in L^p_{loc}(\Delta)$ uniformly in $\nu$ for $1<p<2$. Let $\hat{h}_\nu$ denote the adjugate of $h_\nu$ so that $h_\nu^{-1}=(\det h_\nu)^{-1}\hat{h}_\nu$. Since $h_\nu$ is locally bounded from above and the entries of $\hat{h}_\nu$ are polynomials of the entries of $h_\nu$
$$(\det h_\nu)g^\nu=\hat{h}_\nu h_\nu g^\nu\in L^p_{loc}(\Delta).$$
For each entry of the matrix $(\det h_\nu)g^\nu$ it follows from Jensens inequality just as in the one dimensional case that
$$\int_K\log|(\det h_\nu)g^\nu_{jk}|<\infty$$
and since as we have seen $\log|\det h_\nu|=\log\det h_\nu$ is plurisubharmonic, hence locally integrable, $\tilde{\theta}^{h_\nu}\in L^p_{loc}(\Delta)$ uniformly in $\nu$ for $1<p<2$. This proves (i)'.

Finally, let $A_\varepsilon:=\{\det h>\varepsilon\}$. Then by H\"older's inequality, for $\chi\in C^\infty_c(X)$
\begin{eqnarray}
|\!\int\!\!\chi\tilde{\theta}^{h_\nu}_{jk}|&\!\!\leq&|\!\int_{A_\varepsilon}\!\!\chi\tilde{\theta}^{h_\nu}_{jk}|+\int_{A^c_\varepsilon}\!\!|\chi\tilde{\theta}^{h_\nu}_{jk}|\leq\nonumber\\
&\leq&|\!\int_{A_\varepsilon}\!\!\chi\tilde{\theta}^{h_\nu}_{jk}|+C\left(\int_{A^c_\varepsilon}\!\!|\tilde{\theta}^{h_\nu}_{jk}|^p\right)^{1/p}\!\!\!|A^c_\varepsilon|^{\frac{p}{p-1}}\leq|\!\int_{A_\varepsilon}\!\!\chi\tilde{\theta}^{h_\nu}_{jk}|+C|A^c_\varepsilon|^{\frac{p}{p-1}}\nonumber
\end{eqnarray}
where $1<p<2$. We already know that $\log\det h\in L_{loc}^1(X)$, and so $|A^c_\varepsilon|\to0$ as $\varepsilon\to0$. Hence it is enough to prove convergence on $A_\varepsilon$. But then we are in the original setting of Theorem \ref{thm:main}, which we proved above.

\section{Approximation results}
\noindent In the introduction we defined what it means for a metric to be strictly negatively curved in the sense of Nakano and pointed out that the usefulness of Definition \ref{df:3} stems from the fact that it lends itself well to such regularisations. This is the content of Proposition \ref{prop:1} and the aim of this section is to prove this result.

However, in order to illustrate the main ideas more clearly, we begin by proving a similar result in the simpler Griffiths setting. Hence we start by defining what it means for a singular hermitian metric to be strictly curved in the sense of Griffiths. (In what follows we will assume that $F=\{\det h=0\}$ is a closed set, and that there exists an exhaustion of open sets $\{U_j\}$ of $F^c$ such that $\det h>1/j$ on $U_j$.)

\begin{df}\label{df:4}
We say that a singular hermitian metric $h$ on a holomorphic vector bundle $E$ is strictly negatively curved in the sense of Griffiths if:\\
(i) $h$ is negatively curved in the sense of Definition \ref{df:2}. In particular, by Theorem \ref{thm:main}, $\Theta^h$ exists as a current on $F^c$.\\
(ii) there exists some $\delta>0$ such that on $F^c$
\be\label{eq:GrStr}
\sum_{j,k=1}^n\big(\Theta_{jk}^hs,s\big)_h\xi_j\bar{\xi}_k\leq-\delta\|s\|^2_h|\xi|^2
\ee
in the sense of distributions, for any section $s$ and any vector $\xi\in\C^n$.

We say that $h$ is strictly positively curved in the sense of Griffiths, if the corresponding dual metric is strictly negatively curved.
\end{df}

We now have the following approximation result.

\begin{prop}\label{prop:4.1}
Let $h$ be a singular hermitian metric on a trivial holomorphic vector bundle $E$, over a domain $\Omega$ in $\C^n$, and assume that $h$ is strictly negatively curved in the sense of Griffiths, as in Definition \ref{df:4}. Let furthermore $\{h_\nu\}$ be an approximating sequence of smooth metrics, decreasing pointwise to $h$ on any relatively compact subset of $\Omega$, obtained through convolution of $h$ with an approximate identity.

If $h$ is continuous, then for every $\nu$, $h_\nu$ is also strictly negatively curved in the sense of Griffiths, with the same constant $\delta$ in (\ref{eq:GrStr}).
\end{prop}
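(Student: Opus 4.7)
\emph{Approach.} The plan is to reformulate strict Griffiths negativity of $h_\nu$ as a pointwise differential inequality for $\|u\|^2_{h_\nu}$ against every holomorphic section $u$, and then to derive that inequality from the analogous one for $h$ using the convolution structure together with the fact that translations preserve holomorphicity. Let $\omega_0 := i\sum_j dz_j \wedge d\bar z_j$. The Bochner-type identity (\ref{eq:psh}), together with the standard freedom to choose, at any $z_0$, a holomorphic section $u$ with $u(z_0)=s$ and $D'_{h_\nu}u(z_0)=0$ (realised by a first-order Taylor polynomial), shows that the smooth metric $h_\nu$ is strictly Griffiths negative with constant $\delta$ if and only if
\[
i\partial\dbar \|u\|^2_{h_\nu} \;\geq\; \delta\,\|u\|^2_{h_\nu}\,\omega_0
\]
holds pointwise for every local holomorphic $u$.

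\emph{Inequality for $h$ and its extension across $F$.} Applied distributionally to $h$ on $F^c$, where $h^{-1}$ is bounded and $\Theta^h$ is well-defined by Corollary \ref{cor:1}, the same identity together with Definition \ref{df:4}(ii) yields $i\partial\dbar\|u\|^2_h \geq \delta\,\|u\|^2_h\,\omega_0$ on $F^c$ as $(1,1)$-currents, for every local holomorphic $u$. I would then extend this to all of $\Omega$: continuity of $h$ makes $\|u\|^2_h$ continuous and $\delta\|u\|^2_h\omega_0$ a continuous $(1,1)$-form, while $F=\{\det h=0\}$ has Lebesgue measure zero by Proposition \ref{prop:basic}(ii). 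In the Lebesgue decomposition of the positive Radon measure $i\partial\dbar\|u\|^2_h$, the absolutely continuous part satisfies the desired inequality almost everywhere (it does so on $F^c$, and $F$ is null), and the singular part is supported in $F$ and is automatically nonnegative; the continuous form $\delta\|u\|^2_h\omega_0$ contributes only to the absolutely continuous part. Hence $i\partial\dbar\|u\|^2_h \geq \delta\|u\|^2_h\omega_0$ holds globally as $(1,1)$-currents on $\Omega$.

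\emph{Transfer by convolution.} For a local holomorphic $u$ and $z$ in a relatively compact subdomain,
\[
\|u\|^2_{h_\nu}(z) \;=\; \int u(z)^*\,h(z-w)\,u(z)\,\rho_\nu(w)\,dw \;=\; \int \|\tilde u_w\|^2_h(z-w)\,\rho_\nu(w)\,dw,
\]
where $\tilde u_w(y):=u(y+w)$ is holomorphic on a translate of the original domain. Applying the extended inequality to $\tilde u_w$ and pulling back under the translation $z\mapsto z-w$ gives, as a current in $z$ for each fixed $w$,
\[
i\partial\dbar_z\,\|\tilde u_w\|^2_h(z-w) \;\geq\; \delta\,\|\tilde u_w\|^2_h(z-w)\,\omega_0.
\]
Integrating against the nonnegative weight $\rho_\nu(w)\,dw$ preserves positivity of $(1,1)$-currents and commutes with $\partial\dbar$ (by Fubini), yielding $i\partial\dbar\,\|u\|^2_{h_\nu} \geq \delta\,\|u\|^2_{h_\nu}\,\omega_0$. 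This is a pointwise inequality since $h_\nu$ is smooth, and combining with the reformulation in the first paragraph completes the argument.

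\emph{Main obstacle.} The delicate point is the extension of the strict inequality across $F$ in the second paragraph: Definition \ref{df:4}(ii) supplies it only on $F^c$, yet the averaging $(h*\rho_\nu)(z)$ generically mixes in values of $h$ from $F$, so a genuinely global current inequality is required. The Lebesgue-decomposition argument hinges on $\delta\|u\|^2_h\omega_0$ being a continuous $(1,1)$-form, which is exactly what the continuity hypothesis on $h$ delivers.
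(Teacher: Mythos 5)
Your skeleton matches the paper's: reformulate strict Griffiths negativity as the pointwise inequality $i\partial\dbar\|u\|^2_{h_\nu}\ge\delta\|u\|^2_{h_\nu}\omega_0$ for every holomorphic $u$, establish the corresponding distributional inequality for $h$, extend it across $F$, and transfer it to $h_\nu$ via the translation-invariance of holomorphicity under convolution. The closing observation that one may pick $u$ with $D'_{h_\nu}u=0$ at a point is also exactly how the paper returns from the scalar inequality to (\ref{eq:GrStr}).

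There is however a genuine gap, and a misdiagnosis of where the continuity hypothesis enters. You write ``Applied distributionally to $h$ on $F^c$, \ldots the same identity together with Definition \ref{df:4}(ii) yields $i\partial\dbar\|u\|^2_h\ge\delta\|u\|^2_h\omega_0$,'' as though the Bochner inequality $i\partial\dbar\|u\|^2_h\ge -\langle i\Theta^h u,u\rangle_h$ were automatic for the singular metric. It is not: on $F^c$ the current $\Theta^h$ exists only as a weak limit of the smooth $\Theta^{h_\nu}$ (Theorem \ref{thm:main} and Corollary \ref{cor:1}), and the inequality must be obtained by passing to the limit in $i\partial\dbar\|u\|^2_{h_\nu}\ge -\langle i\Theta^{h_\nu}u,u\rangle_{h_\nu}$. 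The left side converges weakly by monotone convergence, but the right side requires showing that $h_\nu\Theta^{h_\nu}\to h\Theta^h$ weakly on $F^c$, and this is where the real work lies: one splits $h_\nu\Theta^{h_\nu}-h\Theta^h=(h_\nu-h)\Theta^{h_\nu}+h(\Theta^{h_\nu}-\Theta^h)$, controls the first term by the uniform convergence $h_\nu\to h$ (from continuity of $h$) together with the uniform $L^1_{loc}$ bound on $\Theta^{h_\nu}$, and the second term by Theorem \ref{thm:main}. Your proposal skips this entirely.

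Consequently the role you assign to continuity is off target. The Lebesgue-decomposition (equivalently, the $\chi_{F^c}+\chi_F$ splitting the paper uses) only needs $\|u\|^2_h$ to be a measurable, locally bounded density so that $\delta\|u\|^2_h\omega_0$ is an absolutely continuous $(1,1)$-current -- local boundedness is automatic since $h$ is a decreasing limit of smooth metrics, no continuity required. The hypothesis ``$h$ continuous'' is consumed earlier, in forcing $h_\nu\to h$ uniformly on compacts so that the passage $\langle i\Theta^{h_\nu}u,u\rangle_{h_\nu}\to\langle i\Theta^h u,u\rangle_h$ is legitimate. Once you supply that missing limit argument, the rest of your proposal goes through essentially as in the paper.
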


\begin{proof}
Let $h_\nu=h\ast\chi_\nu$ where $\chi_\nu$ is an approximate identity, i.e. $\chi\in C_c^\infty(\Omega)$ with $\chi\geq0$, $\chi(p)=\chi(|p|)$, $\int\chi=1$, and $\chi_\nu(p)=\nu^n\chi(\nu p)$. We start by showing that $\{h_\nu\}$ is decreasing in $\nu$, and that for every holomorphic section $u$ and any $\nu$, $\|u\|_{h_\nu}^2$ is plurisubharmonic.

By definition, $\{h_\nu\}$ is a decreasing sequence if for any constant section $s$, $\|s\|_{h_\nu}^2$ is decrasing. However if $s$ is constant then $\|s\|_{h}^2$ is plurisubharmonic and it is immediate from the definition of $h_\nu$ that
$$\|s\|_{h_\nu}^2=\|s\|_{h}^2\ast\chi_\nu.$$
The statement now follows from the well known fact that convolutions of plurisubharmonic functions are decreasing.

For the second statement we note that for any holomorphic section $u$
$$\|u\|_{h_\nu}^2(p)=\int K(p,q)\chi_\nu(q)dV(q)$$
where (locally)
$$K(p,q)=u^*(p)h(p-q)u(p).$$
Since $\|u\|^2_h$ is assumed to be plurisubharmonic for any holomorphic section $u$, for fixed $q$, $K(p,q)$ will be plurisubharmonic, (and locally bounded from above), in $p$. As furthermore $\chi_\nu dV$ is a positive measure of compact support the result follows from another well-known property of plurisubharmonic functions.

From section 2 we know that for the smooth metrics $h_\nu$
\be\label{eq:4.2}
i\partial\dbar\|u\|^2_{h_\nu}\geq-\langle i\Theta^{h_\nu}u,u\rangle_{h_\nu}
\ee
for any holomorphic section $u$. Our next goal is to show that this inequality holds for $h$ in the sense of distributions.

Now for any test form $\phi$ of bidegree $(n-1,n-1)$
$$\int\phi\wedge i\partial\dbar\|u\|^2_{h_\nu}=\int\|u\|^2_{h_\nu}i\partial\dbar\phi$$
and by the monotone convergence theorem, the right hand side converges. Hence $i\partial\dbar\|u\|^2_{h_\nu}$ converges weakly to $i\partial\dbar\|u\|^2_h$ as measures.

For the right hand side of (\ref{eq:4.2}) we express the curvature tensor in terms of a local basis
$$\Theta^{h_\nu}=\sum_{j,k=1}^n\Theta_{jk}^{h_\nu}dz_j\wedge d\bar{z}_k$$
and let $\phi=\phi_{jk}\widehat{idz_j\wedge d\bar z_k}$, where $\phi_{jk}$ is supported on $F^c$ and $\widehat{idz_j\wedge d\bar z_k}$ denotes the wedge product of all $dz_i$ and $d\bar z_i$ except $dz_j$ and $d\bar z_k$, multiplied by a constant of absolute value 1, chosen so that $idz_j\wedge d\bar z_k\wedge\widehat{idz_j\wedge d\bar z_k}=dV$. We then have that
{\setlength\arraycolsep{2pt}
\begin{eqnarray}
&&\int\phi\wedge\Big(\langle i\Theta^{h_\nu}u,u\rangle_{h_\nu}-\langle i\Theta^hu,u\rangle_h\Big)=\!\!\int\phi_{jk}\ u^*\Big(h_{\nu}\Theta_{jk}^{h_\nu}-h\Theta_{jk}^h\Big)udV\leq\nonumber\\
&&\quad\leq\big|\int\phi_{jk}\ u^*(h_\nu-h)\Theta_{jk}^{h_\nu}u\ dV\big|+\big|\int\phi_{jk}\ u^*h\big(\Theta_{jk}^{h_\nu}-\Theta_{jk}^h\big)u\ dV\big|.\nonumber
\end{eqnarray}}
\!\!We know that $h_\nu$ converges uniformly to $h$ since, by assumption, $h$ is continuous. As $\Theta^{h_\nu}\in L_{loc}^1(F^c)$ uniformly in $\nu$, this implies that the first term converges to zero on $F^c$. Furthermore, by Theorem \ref{thm:main} we know that $\Theta^{h_\nu}$ converges weakly to $\Theta^h$ as currents with measurable coefficients, and so the second term also converges to zero on $F^c$. Thus we have that $h_\nu\Theta^{h_\nu}$ converges weakly to $h\Theta^h$ on $F^c$.

From (\ref{eq:4.2}) and Definition \ref{df:4} we now get that
$$i\partial\dbar\|u\|^2_{h}\geq-\langle i\Theta^{h}u,u\rangle_{h}\geq\delta\|u\|^2_h\sum_{j=1}^nidz_j\wedge d\bar{z}_j$$
in the sense of distributions on $F^c$. Moreover, by Definition \ref{df:4} the left hand side is a positive measure. Hence if we let $\chi_{F^c}$ and $\chi_F$ denote the characteristic functions of $F^c$ and $F$ respectively, we get
{\setlength\arraycolsep{2pt}
\begin{eqnarray}
i\partial\dbar\|u\|^2_{h}&=&\chi_{F^c}i\partial\dbar\|u\|^2_{h}+\chi_{F}i\partial\dbar\|u\|^2_{h}\geq\nonumber\\
&\geq&\chi_{F^c}i\partial\dbar\|u\|^2_{h}\geq\chi_{F^c}\delta\|u\|^2_h\sum_{j=1}^nidz_j\wedge d\bar{z}_j.\nonumber
\end{eqnarray}}
\!\!Since furthermore $F$ is a set of Lebesgue measure zero, we altogether have that
\be\label{eq:4.3}
i\partial\dbar\|u\|^2_{h}\geq\delta\|u\|^2_h\sum_{j=1}^nidz_j\wedge d\bar{z}_j
\ee
in the sense of distributions (on all of $\Omega$).

If we let $h^q(p):=h(p-q)$ we have
$$\|u\|_{h_\nu}^2(p)=\int\|u\|_{h^q}^2(p)\chi_\nu(q)dV(q).$$
Combining this with (\ref{eq:4.3}) yields
{\setlength\arraycolsep{2pt}
\begin{eqnarray}
&&i\partial\dbar \|u\|^2_{h_\nu}(p)=\int\chi_\nu(q)i\partial\dbar\|u\|^2_{h^q}(p)dV(q)\geq\nonumber\\
&&\quad\geq\delta\int\|u\|^2_{h^q}(p)\chi_\nu(q)dV(q)\sum_{j}^nidz_j\wedge d\bar{z}_j=\delta\|u\|^2_{h_\nu}(p)\sum_{j}^nidz_j\wedge d\bar{z}_j\nonumber.
\end{eqnarray}}

Lastly, as discussed in section 2, as long as the metric is smooth, i.e. for fix $\nu$, the section $u$ can always be chosen so that
$$i\partial\dbar \|u\|^2_{h_\nu}=-\langle i\Theta^{h_\nu}u,u\rangle_{h_\nu}.$$
Hence
$$\sum_{j,k=1}^n\big(\Theta_{jk}^{h_\nu}u,u\big)_{h_\nu}idz_j\wedge d\bar{z}_k\leq-\delta\|u\|^2_{h_\nu}\sum_{j=1}^nidz_j\wedge d\bar{z}_j$$
which is what we wanted to prove.
\end{proof}

\begin{remark}
As the dual of a strictly Griffiths negative metric is strictly Griffiths positive, a corresponding approximation result holds for singular hermitian metrics that are strictly positively curved in the sense of Griffiths, as in Definition \ref{df:4}. Thus for vector bundles over Riemann surfaces, where the notions of Griffiths and Nakano curvature coincide, one can use Proposition \ref{prop:4.1} to prove a Demailly-Nadel type of vanishing theorem (see \cite{R} Theorem 1.2).\qed
\end{remark}

We now turn to the proof of Proposition \ref{prop:1}. As already mentioned it is very similar to the previous proof, and so we will be rather sketchy.

\begin{proof}[Proof of Proposition \ref{prop:1}] Let $h_\nu:=h\ast\chi_\nu$ where $\chi_\nu$ is an approximate identity. We begin by observing that if $u=(u_1,\ldots,u_n)$ is an $n$-tuple of holomorphic sections of $E$, then locally
$$(u_j,u_k)_{h_\nu}(p)=\int(u_j,u_k)_{h^q}(p)\chi_\nu(q)dV(q),$$
where $h^q(p):=h(p-q)$ just as before. In particular if we let $p=(z_1,\ldots,z_n)$ and study the $(n-1,n-1)$-form
$$T_u^h=\sum_{j,k=1}^n(u_j,u_k)_h\widehat{dz_j\wedge d\bar z_k}$$
introduced and discussed in sections 1 and 2, we see that
\be\label{eq:Tappr}
T_u^{h_\nu}(p)=\int T_u^{h^q}(p)\chi_\nu(q)dV(q).
\ee
By Definition \ref{df:3}, $h$ is negatively curved in the sense of Nakano if $T^h_u$ is plurisubharmonic for any holomorphic $n-$tuple $u$. Thus in exactly the same way as in the previous argument we get that $T_u^{h_\nu}$ is a plurisubharmonic $(n-1,n-1)-$form, and that $T^{h_\nu}_u$ decrease pointwise to $T_{u}^h$.

Now from section 2 we know that for smooth metrics
\be\label{eq:ineq}
i\partial\dbar T_u^{h_\nu}\geq -\sum_{j,k=1}^n\big(\Theta_{jk}^{h_\nu}u_j,u_k\big)_{h_\nu}dV.
\ee
It follows from the same argument as in the previous proof that this inequality still holds on $F^c$, in the sense of distributions, in the singular setting. Together with Definition \ref{df:3} this yields that on $F^c$
$$i\partial\dbar T_{u}^{h}\geq -\sum_{j,k=1}^n\big(\Theta_{jk}^{h}u_j,u_k\big)_{h}dV\geq\delta\sum_{j=1}^n\|u_j\|^2_hdV.$$
From Definition \ref{df:3} we also know that the left hand side is a positive measure, and so by the same reasoning as before we have
$$i\partial\dbar T_{u}^{h}\geq\delta\sum_{j=1}^n\|u_j\|^2_hdV$$
in the sense of distributions (on all of $\Omega$).

Combined with (\ref{eq:Tappr}) this in turn gives
{\setlength\arraycolsep{2pt}
\begin{eqnarray}
i\partial\dbar T_u^{h_\nu}(p)&=&\int\chi_\nu(q)i\partial\dbar T_{u}^{h^q}(p)dV(q)\geq\nonumber\\
&\geq&\delta\sum_{j,k=1}^n\Big(\int\|u_j\|_{h^q}^2(p)\chi_\nu(q)dV(q)\Big)dV(p)=\nonumber\\
&=&\delta\Big(\sum_{j=1}^n\|u_j\|^2_{h_\nu}(p)\Big)dV(p).\nonumber
\end{eqnarray}}

Finally, from section 2 we know that as long as the metric is smooth, the sections $u$ can always be chosen so that
$$i\partial\dbar T_u^{h_\nu}=-\sum_{j,k=1}^n\big(\Theta_{jk}^{h_\nu}u_j,u_k\big)_{h_\nu}dV.$$
Hence
$$\sum_{j,k=1}^n\big(\Theta_{jk}^{h_\nu}u_j,u_k\big)_{h_\nu}\leq-\delta\sum_{j=1}^n\|u_j\|^2_{h_\nu}$$
and we are done.
\end{proof}

\begin{remark}
For Nakano positive metrics this argument will not work since we do not have any counterpart of inequality (\ref{eq:ineq}) in that setting. In the proof we did use the fact that we can always choose the $n$-tuple $u$ such that equality holds in (\ref{eq:ineq}), but this was for a fix $\nu$, and it is not possible to do this in such a way that equality holds \textit{uniformly} in $\nu$.

In the Griffiths setting the positive case was addressed just by studying duals. As mentioned in the introduction, the dual of a Nakano negative bundle in general is not Nakano positive, and so this approach is not possible here. However for dual Nakano negative bundles, the proposition applies.\qed
\end{remark}

\begin{bibdiv}
\begin{biblist}

\bib{B}{article}{
   author={Berndtsson, Bo},
   title={Curvature of vector bundles associated to holomorphic fibrations},
   journal={Ann. of Math. (2)},
   volume={169},
   date={2009},
   number={2},
   pages={531--560},
}

\bib{BP}{article}{
   author={Berndtsson, Bo},
   author={P{\u{a}}un, Mihai},
   title={Bergman kernels and the pseudoeffectivity of relative canonical
   bundles},
   journal={Duke Math. J.},
   volume={145},
   date={2008},
   number={2},
   pages={341--378},
}

\bib{C}{article}{
   author={de Cataldo, Mark Andrea A.},
   title={Singular Hermitian metrics on vector bundles},
   journal={J. Reine Angew. Math.},
   volume={502},
   date={1998},
   pages={93--122},
}

\bib{D1}{article}{
   author={Demailly, Jean-Pierre},
   title={Estimations $L^{2}$ pour l'op\'erateur $\bar \partial $ d'un
   fibr\'e vectoriel holomorphe semi-positif au-dessus d'une vari\'et\'e
   k\"ahl\'erienne compl\`ete},
   language={French},
   journal={Ann. Sci. \'Ecole Norm. Sup. (4)},
   volume={15},
   date={1982},
   number={3},
   pages={457--511},
}

\bib{D2}{article}{
   author={Demailly, Jean-Pierre},
   title={Singular Hermitian metrics on positive line bundles},
   conference={
      title={Complex algebraic varieties},
      address={Bayreuth},
      date={1990},
   },
   book={
      series={Lecture Notes in Math.},
      volume={1507},
      publisher={Springer},
      place={Berlin},
   },
   date={1992},
   pages={87--104},
}

\bib{N}{article}{
   author={Nadel, Alan Michael},
   title={Multiplier ideal sheaves and K\"ahler-Einstein metrics of positive
   scalar curvature},
   journal={Ann. of Math. (2)},
   volume={132},
   date={1990},
   number={3},
   pages={549--596},
}

\bib{R}{article}{
   author={Raufi, Hossein},
   title={The Nakano vanishing theorem and a vanishing theorem of Demailly-Nadel type for vector bundles},
   date={2012},
   status={Preprint},
   eprint={arXiv:1212.4417 [math.CV]},
   url={http://arxiv.org/abs/1212.4417}

}

\end{biblist}
\end{bibdiv}

\end{document}